\documentclass[12pt]{article}
\usepackage{amsmath}
\usepackage{amsfonts}
\usepackage{amsthm}
\usepackage{amssymb}
\usepackage{color}
\usepackage{enumerate}

\makeatletter \@addtoreset{equation}{section}

\makeatother

\addtolength{\textheight}{2cm} \addtolength{\textwidth}{2cm}
\addtolength{\topmargin}{-1cm}
 \addtolength{\oddsidemargin}{-1cm}

\title{ Caccioppoli--type estimates and Hardy--type inequalities derived from degenerated $p$--harmonic problems }

\usepackage{authblk}

\author[1]{Pavel Dr\'ab{e}k~\thanks{e--mail: pdrabek@kma.zcu.cz}}
\author[2,3]{Agnieszka Ka\l{}amajska~\thanks{e--mail: a.kalamajska@mimuw.edu.pl}}
\author[3]{Iwona Skrzypczak~\thanks{e--mail: iskrzypczak@mimuw.edu.pl}}

\affil[1]{\small
Department of Mathematics and NTIS, University of West Bohemia, Pilsen, Czech Republic }

\affil[2]{\small
 Institute of Mathematics,
 Polish Academy of Sciences at Warsaw, Warsaw, Poland\footnote{temporary address}}

\affil[3]{\small
Faculty of Mathematics, Informatics and Mechanics,
University of Warsaw, Warsaw, Poland }
\date{}

\begin{document}
\maketitle \sloppy

\thispagestyle{empty}

\renewcommand{\it}{\sl}
\renewcommand{\em}{\sl}

\belowdisplayskip=18pt plus 6pt minus 12pt \abovedisplayskip=18pt
plus 6pt minus 12pt
\parskip 4pt plus 1pt
\parindent 0pt

\newcommand{\barint}{
         \rule[.036in]{.12in}{.009in}\kern-.16in
          \displaystyle\int  }
\def\r{{\mathbb{R}}}
\def\n{{\mathbb{N}}}
\def\rn{{\mathbb{R}^{n}}}
\def\rom{{\mathbb{R}^{m}}}
\def\w{\widetilde}
\def\M{{\cal M}}
\def\zi{[0,\infty )}
\def\sf{{\rm supp\,\phi}  }
\def\sx{{\rm supp\,\xi}  }
\def\rp{{\mathbb{R}_{+}}}
\def\bA{{\bar{A}}}
\def\O{{\Omega}}
\def\o{{\omega}}
\def\Oc{{\Omega\cap}}
\def\a{{\alpha}}
\def\b{{\beta}}
\def\e{{\epsilon}}
\def\d{{\delta}}
\def\D{{\Delta}}
\def\t{{\tau}}
\def\s{{\sigma}}
\def\g{{\gamma}}
\def\k{{\kappa}}
\def\na{{\nabla}}
\def\irn{{\int_\rn}}
\def\dsp{\def\baselinestretch{1.37}\large
\normalsize}
\def\dist{{\rm dist}}

\newtheorem{theo}{\bf Theorem}[section]
\newtheorem{coro}{\bf Corollary}[section]
\newtheorem{lem}{\bf Lemma}[section]
\newtheorem{rem}{\bf Remark}[section]
\newtheorem{defi}{\bf Definition}[section]
\newtheorem{ex}{\bf Example}[section]
\newtheorem{fact}{\bf Fact}[section]
\newtheorem{prop}{\bf Proposition}[section]

\newcommand{\ds}{\displaystyle}
\newcommand{\ts}{\textstyle}
\newcommand{\ol}{\overline}
\newcommand{\wt}{\widetilde}
\newcommand{\ck}{{\cal K}}
\newcommand{\ve}{\varepsilon}
\newcommand{\vp}{\varphi}
\newcommand{\pa}{\partial}
\newcommand{\psa}{\Phi_{s,r}}


\parindent 1em

\maketitle

\begin{abstract} We obtain Caccioppoli--type estimates for nontrivial and nonnegative solutions to the anticoercive partial differential inequalities of elliptic type involving degenerated $p$--Laplacian: $-\Delta_{p,a} u:= -\mathrm{div}(a(x)|\na u|^{p-2}\na u)\ge b(x)\Phi (u)$, where $u$ is defined in a domain $\Omega$. Using Caccioppoli--type estimates, we obtain several variants of Hardy--type inequalities in weighted Sobolev spaces.

\end{abstract}
\smallskip

  {\small {\bf Key words and phrases:}  $p$--harmonic  PDEs, $p$--Laplacian,
nonlinear eigenvalue problems,   degenerated PDEs, quasilinear PDEs}

{\small{\bf Mathematics Subject Classification (2010)}:  Primary
26D10; Secondary 35D30, 35J60, 35R45 }

\vfill

{\noindent\small\bf Corresponding author:} Pavel Dr\'ab{e}k,
tel. no. +420-377 632 648
fax. no. +420-377 632 602

 \newpage

\section{Introduction}

In this paper we investigate the nonnegative solutions $u:\Omega\rightarrow \r$ to the partial differential inequality (PDI):
\begin{equation}\label{pdi-glowne}
-\Delta_{p,a}u\ge b(x)\Phi(u),
\end{equation}
where $\Omega\subseteq\r^n$ is an arbitrary open domain, $p>1$, the operator \(
\Delta_{p,a}u=\mathrm{div}(a(x)|\nabla u|^{p-2}\nabla u)\) is the degenerated $p$--Laplacian involving a weight function
$a(\cdot):\O\rightarrow [0,\infty)$, $b(\cdot)$ is a measurable function defined on $\Omega$, and $\Phi:[0,\infty)\rightarrow [0,\infty)$ is a given continuous function.


One of our main results, Theorem~\ref{theoHardy}, says that if the nonnegative function~$u$ solves~\eqref{pdi-glowne}, then we can apply it to construct the family of Hardy--type inequalities of the form:
\begin{equation*}
\int_\O \ |\xi|^p \mu_1(dx)\le \int_\O |\nabla \xi|^p\mu_2(dx),
\end{equation*}
where the measures $\mu_1$ and $\mu_2$ involve $u$ and the other quantities from~\eqref{pdi-glowne}, and $\xi$ is an arbitrary Lipschitz compactly supported  function defined on $\Omega$.
Those inequalities are constructed as a direct consequence of the Caccioppoli--type estimate for solutions to~\eqref{pdi-glowne} derived in Theorem~\ref{corocac}.

Our purpose is to investigate the two following issues: the qualitative theory of solutions to nonlinear problems and derivation of precise Hardy--type inequalities. We contribute to the first of them by obtaining Caccioppoli--type estimate for a priori not known solution, which in general is an important tool in the regularity theory. In the second issue we assume that the solution to~\eqref{pdi-glowne} is known and we use it in construction of Hardy--type inequalities. Substituting $a\equiv 1$  in our considerations, we retrieve several results obtained by the third author in~\cite{plap}, where she dealt with the partial differential inequality of the form $-\Delta_{p}u\ge \Phi$, admitting the function $\Phi$ depending on~$u$ and $x$. Some of the inequalities derived in~\cite{plap}, which motivated us to write this work, as well as those obtained here are precise
  as they hold with the best constants, see Remark~\ref{remconst}, Theorem~\ref{konstrukcje} and Theorem~\ref{theoHardysharp}.

The approach presented here and in the papers~\cite{plap}
and~\cite{orliczhardy} is the modification of methods from
\cite{nonex}. In all of these papers, the investigations start
with derivation of Caccioppoli--type estimates for the solutions
to nonlinear problem. The method  was inspired by the well known
nonexistence results by Pohozhaev and Mitidieri~\cite{pohmi_99}.

In contrast with the results from~\cite{{nonex},plap}, in this
paper we admit the degenerated $p$--Laplacian: $\Delta_{p,a}$
instead of the classical one in~\eqref{pdi-glowne}. Our main
results are the Caccioppoli--type estimate (Theorem~\ref{corocac})
and the Hardy--type inequality (Theorem~\ref{theoHardy}).
Some of the results obtained here are new even in the nondegenerated case $a\equiv 1$, see Remark~\ref{remNondeg} for details.

The discussion linking the eigenvalue problems with Hardy--type inequalities can be found in the paper by Gurka~\cite{gurka}, which generalized earlier  results by Beesack~\cite{beesack}, Kufner and Triebel~\cite{kufnertriebel}, Muckenhoupt~\cite{muckenhoupt}, and
 Tomaselli~\cite{tomaselli}. See also related more recent paper by   Ghoussoub and Moradifam~\cite{gm}.
Derivation of the Hardy inequalities on the basis of
supersolutions to $p$--harmonic differential problems  can be
found in papers by D'Ambrosio~\cite{dam1,dam2,dam3} and Barbatis,
Filippas, and Tertikas~\cite{bft1,bft2}. Other interesting results
linking the existence of solutions  in elliptic and parabolic
PDEs with Hardy type inequalities are presented
in~\cite{{anh},bargold,gaap,xiang,vz}, see also references
therein. We refer also to the recent contribution by the third
author  \cite{orliczhardy}, where,  instead of the nondegenerated
$p$--Laplacian in \eqref{pdi-glowne}, one deals with the
$A$--Laplacian: $\Delta_Au={\rm div}\left( \frac{A(|\nabla
u|)}{|\nabla u|^2}\nabla u\right)$, involving a function $A$ from
the Orlicz class. Similar estimates in the framework of nonlocal
operators can be found e.g. in~\cite{bogdan}.

Let us present several reasons to investigate the partial differential inequality of the form $-\Delta_{p,a}u\ge b(x)\Phi(u)$ rather than a simple one $-\Delta_{p}u\ge \Phi(u)$.

The first inspiration comes from the investigation of the Matukuma equation
\[
\Delta u + \frac{1}{1+|x|^2}u^q=0,\ q>1,
\]
which describes the dynamics of globular
clusters of stars~\cite{matukuma} and existence results for its generalized version, Matukuma--Dirichlet problems studied in~\cite{drabekgarciahuidobro} and reading as follows:
\begin{eqnarray*}
\left\{
\begin{array}{cc}
-{\rm div}\left( |x|^{\alpha }m(|\nabla u|)\nabla u \right) + \frac{|x|^{s-b}}{(1+|x|^b)^{s/b}}g(u)=0 & {\rm in} \ B(0,R),\\
u= 0 & {\rm on}\ \partial B(0,R).
\end{array}
\right.
\end{eqnarray*}

 Similar PDEs arise often in astrophysics to model several phenomena. For instance,  classical models of globular clusters of stars are modeled by  Eddington's equation~\cite{eddi}. Similar structure have models of dynamics of elliptic galaxies~\cite{bt}. Qualitative properties of solutions to the equations inspired by models and their generalizations, are considered e.g. in~\cite{bt,bat,ber, ciotti,drabekgarciahuidobro,puccimanahuidmana}.

The second motivation comes from functional analysis and it concerns the embeddings of $W^{1,p}_{a(\cdot)}(\Omega)$ into $L^s_{b(\cdot)}(\Omega)$ and its generalizations, when  Orlicz spaces are considered instead of $L^s_{b(\cdot)}(\Omega)$.
In such situation the equation
\begin{equation}\label{pattern}
-\mathrm{div}(a(x)|\nabla u|^{p-2}\nabla u)=\gamma b(x)|u|^{s-2}u
\end{equation}
is the Euler--Lagrange equation for the Rayleigh energy functional
\[
E(u)=\frac{\left(\int_\Omega |\nabla u(x)|^p
a(x)dx\right)^{\frac{1}{p}}}{\left(\int_\Omega | u(x)|^s
b(x)dx\right)^{\frac{1}{s}}}. \] The particular case of the
embedding $W^{1,p}_{a(\cdot)}(\Omega)\rightarrow
L^s_{b(\cdot)}(\Omega)$, where the weights are $a=|x|^\alpha,$
$b=|x|^\beta$, is the Caffarelli--Kohn--Nirenberg
inequality~\cite{caf-cohn-nir}.

The third reason to investigate solutions of degenerated PDEs is that even if we deal with equation like \eqref{pattern} in the case $a(x)\equiv 1$, and we know that its  solution $u(x)=w(|x|)$ is radial, we  can transform equation \eqref{pattern} into the related degenerated ODE involving two weights. For example, the equation
\begin{equation*}
-{\rm div}\left(t^{n-1}|v'(t)|^{p-2}v'(t)\right)= \gamma t^{n-1}|v(t)|^{p^*_{\beta}-2}v(t),
\end{equation*}
where $v(t)=w (r(t))$, $r(t)$ is inverse to $t(r)=\int_0^rs^{-\beta/p}ds =\frac{p}{p-\beta}r^{(p-\beta)/p}$, $p^*_\beta =p\frac{n-\beta}{n-p}$ is the Sobolev exponent in the embedding $W^{1,p}_{|x|^\beta}(\rn)\rightarrow L^{p^*_\beta}(\rn)$
  given by the Caffarelli--Kohn--Nirenberg inequality~\cite{caf-cohn-nir}, is related to the transformation of equation
\begin{equation}\label{cafaineq}
-\Delta_pu = \gamma |x|^{-\beta}|u|^{p^*_\beta-2}u,
\end{equation}
see e.g.~\cite{puccimanahuidmana} and the discussion on page 525 in~\cite{pucci}.

In many cases the solutions are known and therefore we can use them to construct Hardy--type inequalities. For example, it has been shown in~\cite[Theorem 5.1]{pucci}, that the function
\begin{equation}\label{talentiextre}
u(x)=c\left( 1+|x|^{\frac{p-\beta}{p-1}} \right)^{-\frac{(n-p)}{p-\beta}}\ {\rm where}\ c=\left[\frac{n-\beta}{\gamma}\left( \frac{n-p}{p-1} \right)^{p-1}  \right]^{\frac{(n-p)}{p-\beta}},
\end{equation}
is the solution of the  equation \eqref{cafaineq} in the case
of $\beta<p<n$. When $\beta=0$, we deal with Talenti extremal profile~\cite{talenti}.
This fact was the motivation for the analysis presented in the paper \cite{ak-is}, reported in Section \ref{sec:H}, where the authors, under ceratin assumptions, obtained  the inequality
\[
\bar{C}_{\gamma,n,p,r}\int_\rn \ |\xi|^p \left(1+r|x|^{\frac{p}{p-1}}\right)  \left(1+|x|^{\frac{p}{p-1}}\right)^{\gamma (p-1)-p}\, dx\le \int_\rn |\nabla \xi|^p\left(1+|x|^{\frac{p}{p-1}}\right)^{(p-1)\gamma}
\]
 in some cases with the best constants. Such inequalities in the case $p=2$ are of interest in the theory of nonlinear diffusions, where one
 investigates the  asymptotic behavior of solutions of equation $u_t=\Delta u^m$,
      see~\cite{blanchet_07} and the related works \cite{blanchet_09,sharp,gm}.

It might happen that the solutions to the partial differential inequality or equation~\eqref{pdi-glowne}  are known to exist by some existence theory, but their precise form
      is not known. In such a situation, under certain assumptions, we are still able to construct the Hardy inequality of the type
\[
\int_\O \ |\xi|^p b(x)\, dx\le \int_\O |\nabla \xi|^pa(x)\, dx,
\]
which perhaps could be applied to study further properties of solutions.
For example, the Hardy--Poincar\'{e} inequalities
like above, where $a(\cdot )=b(\cdot )$, are often equivalent to the solvability of degenerated PDEs of the type $${\rm div}\left(a(x)|\nabla u(x)|^{p-2}\nabla u(x)\right)=x^*,$$ where $x^*$ is an arbitrary functional on weighted Sobolev space $W^{1,p}_{\varrho,0}(\Omega)$ defined as the completion of $C_0^\infty(\Omega)$ in the norm of Sobolev space $W^{1,p}_{\varrho}(\Omega)$, see Theorem~7.12 in~\cite{akiraj}.

We hope that by the investigation of  the qualitative properties of  supersolutions to degenerated PDEs  and by
 constructions of Hardy--type inequalities, we can get deeper insight into the theory of degenerated elliptic PDEs.

\section{Preliminaries}\label{prelim}

\subsection*{Basic notation}

In the sequel we assume that $p>1$, $\O\subseteq\rn$ is an open subset not necessarily bounded.
By  $a(\cdot)-p$--harmonic problems we understand those which involve  degenerated  $p$--Laplace operator:
\begin{equation}\label{plaplacian}
\Delta_{p,a}u=\mathrm{div}(a(x)|\nabla u|^{p-2}\nabla u),
\end{equation}
with some nonnegative function $a(\cdot)$. The derivatives which appear in~\eqref{plaplacian} are understood in a distributional sense.
By $D {'}(\O)$ we denote the space of distributions defined on $\O$. If $f$ is defined on $\O$, then by $f\chi_{\O}$ we understand a function defined on $\rn$ which is equal to $f$ on $\Omega$ and
 which is extended by $0$ outside $\O$. Negative part of $f$ is denoted by $f^{-}:= {\rm min}\{ f,0\}$, while positive one by $f^{+}:= {\rm max}\{ f,0\}$. Moreover, every time when we deal with infimum, we set $\inf\emptyset=+\infty$.

\subsection*{Weighted Beppo Levi and Sobolev spaces}

\textbf{$B_p$ weights. } We deal with the special class of measures belonging to the  class  $B_p(\Omega)$.

\begin{defi}[Classes $W(\Omega)$ and $B_p(\Omega)$]\label{weightclass}
\rm
Let $\Omega\subseteq \mathbb{R}^{n}$ be an open set and let $\mathcal{M}(\Omega)$ be the set of all Borel  measurable real  functions defined on $\Omega$.
 Denote
$W(\Omega):= \left\{\varrho\in \mathcal{M}(\Omega):  \ 0<\varrho(x)<\infty,\ {\rm for\ a.e.}\  x\in\Omega \right\},$  and let $p>1$.
We will say that a weight $\varrho\in W(\Omega)$ satisfies $B_p(\Omega)$--condition
($\varrho\in B_p(\Omega)$ for short) if $
\varrho^{-1/(p-1)}\in L^1_{{  loc}}(\Omega).$
\end{defi}


\noindent The H\"older inequality leads to the following simple observation based on Theorem 1.5 in \cite{kuf-opic}. For readers' convenience we enclose the proof.

\begin{prop}\label{Lp:inclu:L1}
Let $\Omega\subset \mathbb{R}^n$ be an open set, $p>1$ and $\varrho\in B_p(\Omega)$. Then
\(
L^{p}_{\varrho,loc}(\Omega)\subseteq  L^1_{{\rm loc}}(\Omega)
\) and when $u_k\to u$ locally in $L^{p}_{\varrho}(\Omega)$ then also $u_k\to u$ in $L^1_{{ loc}}(\Omega)$.
\end{prop}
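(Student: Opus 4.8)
The plan is to reduce both assertions to a single application of the H\"older inequality, the point being that the $B_p(\Omega)$--condition $\varrho^{-1/(p-1)}\in L^1_{loc}(\Omega)$ supplies exactly the local integrability of the negative power of the weight that is needed to dominate the unweighted $L^1$--norm by the weighted $L^p$--norm.

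First I would fix an arbitrary compact set $K\subset\Omega$ and, for $u\in L^p_{\varrho,loc}(\Omega)$, split the integrand on $K$ as $|u|=\left(|u|\varrho^{1/p}\right)\varrho^{-1/p}$. Applying H\"older's inequality with the conjugate exponents $p$ and $p'=p/(p-1)$ then gives
$$\int_K |u|\,dx\le \left(\int_K |u|^p\varrho\,dx\right)^{1/p}\left(\int_K \varrho^{-p'/p}\,dx\right)^{1/p'}.$$
The key algebraic observation is that $p'/p=1/(p-1)$, so the second factor is $\left(\int_K \varrho^{-1/(p-1)}\,dx\right)^{1/p'}$, which is finite precisely by the $B_p(\Omega)$--condition. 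Since the first factor is finite by the definition of $L^p_{\varrho,loc}(\Omega)$, we conclude $\int_K|u|\,dx<\infty$, and as $K$ was arbitrary this yields $u\in L^1_{loc}(\Omega)$.

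For the convergence statement I would run the identical estimate with $u$ replaced by $u_k-u$. By hypothesis $\int_K|u_k-u|^p\varrho\,dx\to 0$ for every compact $K\subset\Omega$, while the weight factor $\left(\int_K \varrho^{-1/(p-1)}\,dx\right)^{1/p'}$ is a fixed finite constant independent of $k$; multiplying the two shows $\int_K|u_k-u|\,dx\to 0$, i.e. $u_k\to u$ in $L^1_{loc}(\Omega)$.

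I do not anticipate any real obstacle here: the argument is a textbook H\"older estimate, and the only genuinely substantive point is the bookkeeping of the conjugate exponents that makes the exponent $-1/(p-1)$ of the $B_p$--condition appear, together with the observation that the resulting constant is locally finite and $k$--independent, which is what transfers both the membership and the convergence.
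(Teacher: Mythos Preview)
Your proof is correct and is essentially identical to the paper's own argument: the paper applies H\"older's inequality on a relatively compact $\Omega'\subset\Omega$ with the same splitting $|u|=|u|\varrho^{1/p}\cdot\varrho^{-1/p}$, and then remarks that substituting the difference of two terms of the sequence yields the convergence statement. There is no meaningful difference in approach.
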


\noindent
{\bf Proof.} For any $\O{'}\subseteq\O$ such that $\overline{\O{'}}\subseteq\O$ and any $u\in L^p_{\varrho,loc}(\O)$
\[
\int_{\O{'}}|u|dx=\int_{\O{'}}|u|\varrho^{\frac{1}{p}}\varrho^{-\frac{1}{p}}dx\le \left(\int_{\O{'}}|u|^p\varrho dx \right)^{\frac{1}{p}} \left(\int_{\O{'}}\varrho^{-\frac{1}{p-1} } dx \right)^{1-\frac{1}{p}}<\infty.
\] The substitution of $u_k-u_l$ instead of $u$ implies second part of the statement.
\hfill$\Box$

\noindent\textbf{Weighted Beppo Levi space. } Assume that $\varrho(\cdot)\in B_p(\O)$. We  deal with the weighted Beppo Levi space
\[
{\cal L}^{1,p}_{\varrho}(\Omega):= \{ u\in D{'}(\Omega): \frac{\partial u}{\partial x_i} \in L^p_\varrho(\O)\ {\rm for}\ i=1,\dots,n\}.
\]
According to the above proposition and  \cite[Theorem 1, Section 1.1.2]{ma}, we have ${\cal L}^{1,p}_{\varrho}(\Omega)\subseteq W^{1,1}_{loc}(\O)$.
We will also consider local variants of Beppo Levi spaces: ${\cal L}^{1,p}_{\varrho,loc}(\Omega):= \{ u\in D{'}(\Omega): \int_{\Omega{'}}|\nabla u(x)|^p \varrho(x)dx <\infty \}$,
whenever $\overline{\Omega{'}}$ is a compact subset of $\Omega$. As it is also a subset in $W^{1,1}_{loc}(\O)$,
integration by parts formula applies to elements of ${\cal L}^{1,p}_{\varrho,loc}(\Omega)$ in the usual way.

\noindent\textbf{Two-weighted Sobolev spaces. } Let $\varrho_1(\cdot)\in W(\Omega),\varrho_2(\cdot)\in B_p(\O)$. We consider the space $W^{1,p}_{(\varrho_1,\varrho_2)}(\O)= L^p_{\varrho_1}(\O)\cap{\cal L}^{1,p}_{\varrho_2}(\Omega)$, i.e.
\begin{equation}\label{polnorma}
W^{1,p}_{(\varrho_1,\varrho_2)}(\Omega):= \left\{ f\in L^{p}_{\varrho_1}(\Omega)\cap D^{'}(\Omega) :  \frac{\partial f}{\partial x_1} , \dots ,\frac{\partial f}{\partial x_n} \in L_{\varrho_2}^p (\Omega )\right\},
\end{equation}
with the norm
$\| f\|_{W^{1,p}_{(\varrho_1,\varrho_2)}(\O)}:= \| f\|_{L^{p}_{\varrho_1}(\O)} + \| \nabla f\|_{L^{p}_{\varrho_2}(\O)}$.

\begin{prop}[\cite{kuf-opic}]\label{wlasn}
Let $p>1$, $\Omega\subseteq \mathbb{R}^{n}$ be an open set and $\varrho_1(\cdot)\in W(\Omega)$, $\varrho_2(\cdot)\in B_p(\O)$. Then  $W^{1,p}_{(\varrho_1,\varrho_2)}(\Omega)$ defined by \eqref{polnorma} equipped with the norm $\| \cdot\|_{W_{(\varrho_1,\varrho_2)}^{1,p}(\Omega)}$ is a Banach space.
\end{prop}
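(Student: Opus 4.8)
The plan is to dispatch the normed-space axioms quickly and then to concentrate on completeness, which is the real content. Two facts will do the heavy lifting: first, the plain completeness of the weighted Lebesgue spaces $L^p_{\varrho_1}(\Omega)$ and $L^p_{\varrho_2}(\Omega)$, which holds because $\varrho_i\,dx$ are genuine $\sigma$-finite Borel measures (as $\varrho_i>0$ and $<\infty$ a.e.), so Riesz--Fischer applies; and second, Proposition~\ref{Lp:inclu:L1} together with the embedding ${\cal L}^{1,p}_{\varrho_2}(\Omega)\subseteq W^{1,1}_{loc}(\Omega)$ recorded just above, which will let me promote weighted-norm convergence to $L^1_{loc}$ convergence and thereby identify distributional derivatives in the limit.

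First I would check that $\|\cdot\|_{W^{1,p}_{(\varrho_1,\varrho_2)}}$ is a norm: homogeneity and the triangle inequality follow termwise from Minkowski's inequality in $L^p_{\varrho_1}(\Omega)$ and in $L^p_{\varrho_2}(\Omega)$, and definiteness holds because $\|f\|_{L^p_{\varrho_1}}=0$ forces $f=0$ a.e. since $\varrho_1>0$ a.e. For completeness I would take a Cauchy sequence $\{f_k\}$; then $\{f_k\}$ is Cauchy in $L^p_{\varrho_1}(\Omega)$ and each $\{\partial f_k/\partial x_i\}$ is Cauchy in $L^p_{\varrho_2}(\Omega)$, so completeness of the weighted Lebesgue spaces produces $f\in L^p_{\varrho_1}(\Omega)$ and $g_i\in L^p_{\varrho_2}(\Omega)$ with $f_k\to f$ in $L^p_{\varrho_1}(\Omega)$ and $\partial f_k/\partial x_i\to g_i$ in $L^p_{\varrho_2}(\Omega)$. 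Since $\varrho_2\in B_p(\Omega)$, Proposition~\ref{Lp:inclu:L1} upgrades the latter to $\partial f_k/\partial x_i\to g_i$ in $L^1_{loc}(\Omega)$. Everything then reduces to showing $f\in D'(\Omega)$ with $\partial f/\partial x_i=g_i$, i.e.\ to passing to the limit in $\int_\Omega f_k\,\partial_i\phi\,dx=-\int_\Omega(\partial_i f_k)\,\phi\,dx$ for $\phi\in C_0^\infty(\Omega)$; the right-hand side already converges, so the whole matter comes down to establishing $f_k\to f$ in $L^1_{loc}(\Omega)$.

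The hard part will be exactly this last step, and it is here that the asymmetry of the hypotheses bites: $\varrho_1$ lies only in $W(\Omega)$, not in $B_p(\Omega)$, so Proposition~\ref{Lp:inclu:L1} cannot be applied to the $f_k$ themselves. My plan is to recover $L^1_{loc}$ convergence from the gradient control instead. Each $f_k$ belongs to ${\cal L}^{1,p}_{\varrho_2}(\Omega)\subseteq W^{1,1}_{loc}(\Omega)$, so on any ball $B\Subset\Omega$ the Poincar\'e inequality gives $\|(f_k-f_l)-\langle f_k-f_l\rangle_B\|_{L^1(B)}\le C_B\|\na(f_k-f_l)\|_{L^1(B)}$, where $\langle\cdot\rangle_B$ denotes the average over $B$; the right-hand side tends to $0$ as the gradients are $L^1_{loc}$-Cauchy. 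Thus $f_k-\langle f_k\rangle_B$ is Cauchy in $L^1(B)$, converging to some $h\in W^{1,1}(B)$ with $\na h=g$ on $B$. Passing to a subsequence along which $f_k\to f$ a.e.\ (available since $f_k\to f$ in $L^p_{\varrho_1}$ and $\varrho_1>0$ a.e.), the constants $\langle f_k\rangle_B=f_k-(f_k-\langle f_k\rangle_B)$ converge a.e.\ on $B$, hence to a constant $c$, so $f=h+c$ a.e.\ on $B$. Consequently $f\in W^{1,1}(B)$ with $\na f=g$ and $f_k\to f$ in $L^1(B)$.

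Since $B\Subset\Omega$ is arbitrary, this yields $f\in W^{1,1}_{loc}(\Omega)\subset D'(\Omega)$ with $\partial f/\partial x_i=g_i\in L^p_{\varrho_2}(\Omega)$, while also $f\in L^p_{\varrho_1}(\Omega)$; hence $f\in W^{1,p}_{(\varrho_1,\varrho_2)}(\Omega)$, and $\|f_k-f\|_{W^{1,p}_{(\varrho_1,\varrho_2)}}=\|f_k-f\|_{L^p_{\varrho_1}}+\|\na f_k-g\|_{L^p_{\varrho_2}}\to 0$. I expect the norm and limit-extraction steps to be entirely routine; the one point requiring genuine care is the identification $\na f=g$, where the Poincar\'e-plus-averages device compensates for the fact that $\varrho_1$ need not satisfy the $B_p$ condition.
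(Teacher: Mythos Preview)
The paper does not give its own proof of this proposition; it is simply quoted from Kufner--Opic \cite{kuf-opic}. So there is no ``paper's proof'' to compare against, and your task was really to supply what the citation hides.

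Your argument is correct. The norm axioms and the extraction of limits $f\in L^p_{\varrho_1}(\Omega)$, $g_i\in L^p_{\varrho_2}(\Omega)$ are indeed routine, and you have put your finger on the only genuine issue: since $\varrho_1$ is merely in $W(\Omega)$ and not in $B_p(\Omega)$, Proposition~\ref{Lp:inclu:L1} does not give $f_k\to f$ in $L^1_{loc}(\Omega)$ directly, so the identification $\partial_i f=g_i$ needs a separate device. Your Poincar\'e-plus-averages argument on balls $B\Subset\Omega$ is exactly the right fix: the gradient control (via $\varrho_2\in B_p(\Omega)$ and Proposition~\ref{Lp:inclu:L1}) makes $f_k-\langle f_k\rangle_B$ Cauchy in $L^1(B)$, and the a.e.\ subsequential convergence extracted from $L^p_{\varrho_1}$-convergence (legitimate because $\varrho_1>0$ a.e.\ forces $\varrho_1\,dx$-null sets to be Lebesgue-null) pins down the floating constants. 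This is essentially the mechanism Kufner and Opic use; you have reconstructed it cleanly.
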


When $ \varrho_1 \equiv  \varrho_2 $, we deal with the usual weighted Sobolev space $W^{1,p}_{\varrho_1}(\O)$.
By   $W_{(\varrho_1,\varrho_2),0}^{1,p}(\Omega)$ we denote the completion of $C_0^\infty (\Omega)$ in the space $W_{(\varrho_1,\varrho_2)}^{1,p}(\Omega)$ and  we use the standard notation $W_{(\varrho_1,\varrho_1),0}^{1,p}(\Omega)=W^{1,p}_{\varrho_1,0}(\O)$ when $\varrho_1=\varrho_2$.

\noindent\textbf{ Some additional facts}

Having an arbitrary function $u\in W^{1,1}_{loc}(\Omega)$ (local Sobolev space), we define its value at every point $x\in\Omega$ by the formula 
\begin{equation}
\label{punkt}
u(x):= \limsup_{r\to 0}\barint_{B(x,r)} u(y)dy.
\end{equation}

 \begin{lem}[e.g. \cite{nonex}, Lemma 3.1]\label{crit}
Let $u\in W^{1,1}_{loc}(\O)$ be defined everywhere by~\eqref{punkt} and let $t\in \r$ be given. Then $
\{ x\in \rn : u(x)=t \}\subseteq \{ x\in\rn :\nabla u(x)=0\}\cup N,$
where $N$ is a set of Lebesgue's measure zero.
\end{lem}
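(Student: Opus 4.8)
The plan is to reduce the claim to the classical fact that the weak gradient of a Sobolev function vanishes almost everywhere on each of its level sets, and then to hide the gap between the everywhere-defined representative and the a.e.-defined gradient inside the null set $N$. First I would fix the precise representative of $u$ furnished by~\eqref{punkt}, so that the level set $E_t:=\{x\in\O:\, u(x)=t\}$ is an honestly well-defined subset of $\O$, and recall that the weak gradient $\nabla u$ does not depend on the chosen representative.

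The heart of the matter is to prove that $\big|\{x\in E_t:\,\nabla u(x)\neq 0\}\big|=0$. To this end I would put $v:=u-t\in W^{1,1}_{loc}(\O)$ and invoke the standard truncation rule for Sobolev functions, which gives $\nabla v^+=\chi_{\{v>0\}}\nabla v$ and $\nabla v^-=\chi_{\{v<0\}}\nabla v$ almost everywhere, using the paper's convention $v^+=\max\{v,0\}$, $v^-=\min\{v,0\}$. Adding these identities and using $v=v^++v^-$ yields $\nabla v=\chi_{\{v\neq 0\}}\nabla v$ a.e., whence $\chi_{\{v=0\}}\nabla v=0$ a.e. Since $\nabla v=\nabla u$ and $\{v=0\}=E_t$, this is exactly the assertion that $\nabla u=0$ almost everywhere on $E_t$.

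With this in hand the conclusion is immediate: set $N:=\{x\in E_t:\,\nabla u(x)\neq 0\}$, which is a Lebesgue-null set by the previous step. Then every $x\in E_t$ either satisfies $\nabla u(x)=0$ and so lies in $\{\nabla u=0\}$, or satisfies $\nabla u(x)\neq 0$ and so lies in $N$; this gives the desired inclusion $E_t\subseteq\{\nabla u=0\}\cup N$.

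I expect the only genuinely delicate point to be the interplay between the everywhere-defined pointwise representative~\eqref{punkt} and the a.e.\ statements above. I would handle it by observing that any two representatives of $u$ agree off a set of measure zero, so their level sets differ only by a null set; consequently the property ``$\nabla u=0$ a.e.\ on the level set'' is independent of the representative, which is precisely what allows the conclusion to hold for the specific representative~\eqref{punkt} at the cost of the exceptional null set $N$. The truncation rule itself I would simply quote as a known chain-rule fact valid in $W^{1,1}_{loc}$, so that no extra integrability of $u$ beyond the standing assumptions is required.
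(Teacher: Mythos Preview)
Your argument is correct and is the standard route to this fact: reduce to $v=u-t$, invoke the truncation/chain rule identities $\nabla v^{\pm}=\chi_{\{\pm v>0\}}\nabla v$ a.e.\ in $W^{1,1}_{loc}$, and conclude that $\nabla u$ vanishes almost everywhere on the level set, absorbing the remaining points into a null set $N$. Your remark about the representative is also on point: since any two representatives differ on a null set, the choice~\eqref{punkt} only affects $E_t$ by a null set, which is harmless here.

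As for comparison with the paper: the paper does \emph{not} supply its own proof of this lemma; it merely records the statement and cites it from~\cite{nonex}, Lemma~3.1. So there is nothing to compare beyond noting that your proof is the classical one (the level-set/Stampacchia-type argument) that underlies the cited result.
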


\subsection*{Degenerated $p$--Laplacian}

Assume that $p>1$, $a\in B_p(\Omega)\cap L^1_{loc}(\Omega)$  (see Definition~\ref{weightclass}), and $u\in {\cal L}^{1,p}_{a,loc}(\O)$.
Then $a|\nabla u|^{p-1}\in L^1_{loc}(\Omega)$ as we have:
\begin{eqnarray*}
\int_{\Omega{'}}a|\nabla u|^{p-1}dx\le \left( \int_{\Omega{'} } adx\right)^{\frac{1}{p}} \left( \int_{\Omega{'}}|\nabla u|^p adx\right)^{1-\frac{1}{p}} <\infty,
\end{eqnarray*}
whenever $\Omega{'}$ is a compact subset of $\Omega$. In particular, $a|\nabla u|^{p-2}\nabla u \in L^1_{loc}(\Omega,\r^n)$ and so the weak divergence of $a|\nabla u|^{p-2}\nabla u \in L^1_{loc}(\Omega,\r^n)$
denoted by $\Delta_{p,a}u$ is well defined via the formula
\begin{equation}\label{placp}
\langle \Delta_{p,a}u,w \rangle = \langle {\rm div} \left(  a|\nabla u|^{p-2}\nabla u \right),w \rangle := -\int_\Omega a|\nabla u|^{p-2}\nabla u\cdot \nabla w dx
\end{equation}
where $w\in C_0^\infty (\Omega)$.
Obviously, in the case $a\equiv 1$ the operator $\Delta_{p,a}u$ reduces to the usual $p$--Laplacian ${\rm div} \left(  |\nabla u|^{p-2}\nabla u \right)$. It particular, it coincides with the Laplace operator in the case $p=2$.

\begin{rem}\label{remprzestrz}
\rm We observe that\begin{itemize}
\item[i)] as  $|\nabla u|^{p-2}\nabla u\in L^{\frac{p}{p-1}}_{a,loc}(\Omega,\rn)$, then the right--hand side in \eqref{placp} is well defined for every $w\in {\cal L}^{1,p}_a (\Omega)$ which is compactly supported in $\Omega$;

\item[ii)]  when $u\in {\cal L}^{1,p}_a(\Omega)$, formula~\eqref{placp} extends  for $w\in W^{1,p}_{(b,a),0}(\Omega)$, whenever $b\in W(\O)$. This follows from the estimates
\begin{eqnarray*}
|\langle \Delta_{p,a}u,w \rangle|&\le& \int_{\Omega}a|\nabla u|^{p-1}|\nabla w|dx= \int_{\Omega}(a^{\frac{1}{p {'}}}|\nabla u|^{p-1})(a^{\frac{1}{p}}|\nabla w|)dx\\
&\le&  \left( \int_{\Omega}|\nabla u|^p adx\right)^{1-\frac{1}{p}}  \left( \int_{\Omega}|\nabla w|^p adx\right)^{\frac{1}{p}}<\infty.
\end{eqnarray*}

Therefore, in that case $\Delta_{p,a}u$ can be also treated    as an element of $(W^{1,p}_{(b,a),0}(\Omega))^*$, the dual to the Banach space $W^{1,p}_{(b,a),0}(\Omega)$.
 We preserve the same notation  $\Delta_{p,a}u$ for this functional extension of formula~\eqref{placp}.
\end{itemize}

\end{rem}

\subsection*{Differential inequality}

Our analysis is based on the following differential inequality.

\begin{defi}\label{defnier}\rm
  Let   $a\in B_p(\Omega)\cap L^1_{loc}(\Omega)$ be a given weight, $u\in {\cal L}^{1,p}_{a,loc}(\O)$ be nonnegative, $\Phi: [0,\infty) \rightarrow [0,\infty)$ be a continuous function, $b(\cdot)$ be measurable and  $\Phi b\in L^1_{loc}(\Omega)$. Suppose further that  for every nonnegative compactly supported function $w\in
{\cal L}^{1,p}_a(\Omega)$ one has
\begin{equation*}
\int_\Omega \Phi(u) b(x)w\,dx >-\infty.
\end{equation*}
 We say that partial differential inequality (PDI for short)
\begin{equation}\label{nier}
-\Delta_{p,a} u\ge \Phi(u)b(x),
\end{equation}
holds if for every nonnegative compactly supported function $w\in
{\cal L}_a^{1,p}(\Omega)$ we have
\begin{equation}\label{nikfo}
 \langle-\Delta_{p,a} u,w \rangle
\ge  \int_\Omega \Phi(u)b(x) w\, dx,
\end{equation}
where $ \langle-\Delta_{p,a} u,w \rangle $ is given by~\eqref{placp}, see also Remark~\ref{remprzestrz}.
\end{defi}
We have the following observations.
\begin{rem}\label{mal1}$ $\rm\begin{itemize}
\item[i)] Inequality \eqref{nier} can be interpreted as a variant of $p$--superharmonicity condition for the degenerated $p$--Laplacian defined by \eqref{plaplacian}.
\item[ii)] In the case of equation in~\eqref{nier}: $-\Delta_{p,a} u= \Phi(u)b(x),$ we deal with the solution of the nonlinear eigenvalue problem.
\item[iii)] When $u\equiv D\ge 0$ is a constant on some subdomain $\O'\subseteq\O$,  inequality \eqref{nier} implies  $b(x)\Phi (D)\le 0$~a.e. in~$\Omega'$, equivalently this means that either $b\le 0$~a.e. in~$\Omega'$ and $\Phi(D)>0 $  or else $\Phi(D)=0$. Consequently, when $\Phi(0)=0$ and $\Phi(t)>0$ for $t>0$, inequality \eqref{nier}
holds on $\O_1$ with $u\equiv D$ if either $D\neq 0$ and $b\le 0$ on $\O_1$  or $D=0$ and $b$ is arbitrary.
\end{itemize}
\end{rem}

\subsection*{Assumption A}

By Assumption A we mean the set of conditions: ($a,b$), ($\Psi,g$), ($u$), and a)--d) below.
\begin{description}
\item[ ($a,b$)] $a\in L^1_{loc}(\Omega)\cap B_p(\Omega)$,  $b(\cdot)$ is measurable;
\item[ ($\Psi,g$)] The couple of continuous functions ($\Psi,g): (0,\infty) \times (0,\infty) \rightarrow (0,\infty) \times (0,\infty)$, where $\Psi$ is   Lipschitz on every closed interval in $(0,\infty)$,
 satisfy the following  compatibility conditions:
\begin{description}
\item[i)] the inequality \begin{equation}\label{Psinier111}g(t){\Psi}{'}(t)\le -C {\Psi} (t)\quad \mathrm{  a.e.\quad in\quad }(0,\infty)
\end{equation}
holds with some constant $C\in \r$ independent of $t$ and $\Psi$ is monotone (not necessarily strictly);
\item[ii)] each of the functions \begin{equation}\label{Theta}
t\mapsto\Theta(t):=\Psi(t){g^{p-1}(t)},\ \ {\rm and}\ \    t\mapsto \Psi(t)/g(t)
\end{equation} is nonincreasing or bounded in some neighbourhood of $0$.
\end{description}
\item[ ($u$)]
We assume that $u\in {\cal L}^{1,p}_{a,loc}(\O)$ is nonnegative, ($a,b$) holds, $\Phi: [0,\infty) \rightarrow [0,\infty)$ is a continuous function,
such that  for every nonnegative compactly supported function $w\in {\cal L}^{1,p}_a(\Omega)$ one has
$\int_\Omega \Phi(u) b(x)w\,dx >-\infty $ and $\Phi(u)b\in L^1_{loc}(\O)$.\\ Moreover, let us consider the set ${\cal A}$ of those $\s\in\r$ for which
\begin{equation}\label{znakphi}
\Phi(u)b(x)+\s\,\frac{ a(x)}{g(u)}|\nabla u|^p\ge 0 \quad \mathrm{  a.e.\ in\ }\Omega\cap \{ u>0\}.
\end{equation}
We suppose that
\begin{equation}\label{s0}
\s_0:= \inf {\cal A}=  \inf  \left\{\s\in\r :\ \s\ {\rm satisfies}\ \eqref{znakphi} \right\}\in \r.
\end{equation}
Since ${\rm inf}\, \emptyset =+\infty$,  ${\cal A}$ can be neither an empty set nor unbounded from below.

\item[a)] We suppose that  ($\Psi,g$) and ($u$) hold. Parameter $\sigma$ satisfies $\s_0\le \s<C$, where  $C$ is given by~\eqref{Psinier111} and $\s_0$ by~\eqref{s0}.
\item[b)] We suppose that  ($u$) and ($\Psi,g$) hold and  we assume that for every $R>0$ we have $b^{+}(x)(\Phi\Psi) (u)\chi_{0<u\le R}\in L^1_{loc}(\Omega)$.
\item[c)] We suppose that  ($u$)  and ($\Psi,g$) hold. When the set $\Omega_0:= \{ x: u(x)=0\}$ has a~positive measure, then we assume that at least one of the following conditions are satisfied \[\mathrm{ x)\ }  \Phi(0)=0,\qquad \mathrm{ y)\ }   b(x)\chi_{\Omega_0}\ge 0 ,\qquad \mathrm{ z)\ }  \lim_{\delta\to 0}\Psi (\delta)=0.\]
\item[d)] We suppose that   ($u$) and ($\Psi,g$) hold. We assume that for any compact subset $K\subseteq \O$ we have
\begin{eqnarray*}
&~& \Psi (R)\int_{K\cap \{ u\ge R/2\} }|\nabla u(x)|^{p-1}a(x)\, dx \stackrel{R\to\infty}{\rightarrow} 0,\\
 &~& \Psi (R)\int_{K\cap \{ u\ge R/2\} } \Phi (u)b(x)\, dx \stackrel{R\to\infty}{\rightarrow} 0.
\end{eqnarray*}

\end{description}

\subsection*{Comments on assumptions}
We have the following observations on Condition ($\Psi,g$).

\begin{rem}\label{Psi/gnonin}\rm \begin{itemize}
\item[i)] Assume that Condition ($\Psi,g$),~{i)} holds and, moreover, $g'(t)\ge-C$. Then $\left(  {\Psi}/{g} \right)'\le 0$ and $\Psi(t)/g(t)$ is nonincreasing.
\item[ii)] This condition is satisfied by pairs from Table~\ref{table}.
\item[iii)] For our purposes it suffices to weaken assumption ($\Psi,g$) in the following way. When $u(x)\in (k_1,k_2)\subseteq[0,\infty)$, we can restrict  Condition ($\Psi,g$) to $(k_1,k_2)$ instead of $(0,\infty)$. This follows from  the proofs presented below.
\end{itemize}\end{rem}
 \begin{center}
\begin{table}[h]
  \begin{center}
  \begin{tabular}{|c|c|c|c|}
    \hline
    $\Psi(t)$    &  $g(t)$  &   C   & remarks\\ \hline
    $t^{-\alpha}$ & $t$ & $\alpha$  & $\alpha\in\r$ \\\hline
    $\left(t\log (a+t)\right)^{-1}$ & $t\log (a+t)$ & $\log a$  & $a>1$ \\\hline
    $e^{-t}$ & bounded by $C$, $g'\ge -C$ & $C $ & $C>0$ \\\hline
    ${e^{-t}}/{t}$ & $t/(1+t)$ & $1$  & --- \\\hline
  \end{tabular}
  \end{center}
\caption{Example couples $(\Psi,g)$ which satisfy  Condition ($\Psi,g$). }\label{table}
\end{table}
\end{center}
The statement below shows that under Assumption A,$(u)$ the function $u$ cannot be constant almost everywhere in $\Omega$. Moreover, in many cases ${\cal A}$ is not empty and ${\rm inf}{\cal A}$ is a real number.
\begin{lem}
Suppose $u\in {\cal L}^{1,p}_{a,loc}(\O)$ is a nonnegative solution to the PDI  $-\Delta_{p,a} u\ge \Phi(u)b(x)$
in the sense of Definition~\ref{defnier}, under all assumptions therein. Moreover, let $b\ge 0$ a.e. in~$\Omega$. Then $\sigma_0$ given by~\eqref{s0} exists  and is finite if and only if $u$ is not a~constant function a.e. in~$\Omega$.
\end{lem}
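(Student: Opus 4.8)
The plan is first to reduce the claimed equivalence to a single statement about the sign of $\s_0$. Since $b\ge 0$, $\Phi\ge 0$, $a\ge 0$ and $g>0$ on $(0,\infty)$, the summand $\Phi(u)b(x)$ is nonnegative a.e. and $\frac{a(x)}{g(u)}|\na u|^p\ge 0$ a.e. on $\O\cap\{u>0\}$. Hence for every $\s\ge 0$ both terms in \eqref{znakphi} are nonnegative, so \eqref{znakphi} holds; that is, $[0,\infty)\subseteq{\cal A}$. In particular ${\cal A}\neq\emptyset$ and $\s_0=\inf{\cal A}\le 0<+\infty$ always holds once $b\ge 0$, so ``$\s_0$ exists and is finite'' is equivalent to ``$\s_0>-\infty$'', i.e. to ${\cal A}$ being bounded below. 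It therefore suffices to prove the equivalence: $u$ is a.e. constant $\iff$ ${\cal A}$ is not bounded below ($\s_0=-\infty$).

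For the easy implication, suppose $u\equiv D\ge 0$ a.e. If $D=0$ then $\O\cap\{u>0\}$ is a null set and \eqref{znakphi} holds vacuously for every $\s\in\r$. If $D>0$ then $\na u=0$ a.e., so $\frac{a}{g(u)}|\na u|^p=0$ a.e. and \eqref{znakphi} reduces to $\Phi(D)b(x)\ge 0$, which holds by $b\ge 0$, again for every $\s\in\r$. In both cases ${\cal A}=\r$, whence $\s_0=-\infty$.

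For the converse I argue by contraposition: assuming $\s_0=-\infty$, I deduce that $u$ is a.e. constant. Unboundedness of ${\cal A}$ from below furnishes a sequence $\s_k\in{\cal A}$ with $\s_k\to-\infty$; writing \eqref{znakphi} for $\s_k<0$ and rearranging gives $|\s_k|\,\frac{a(x)}{g(u)}|\na u|^p\le \Phi(u)b(x)$ a.e. on $\O\cap\{u>0\}$. Because $\Phi(u)b\in L^1_{loc}(\O)$ is finite a.e., letting $k\to\infty$ forces $\frac{a}{g(u)}|\na u|^p=0$, hence $a|\na u|^p=0$, a.e. on $\O\cap\{u>0\}$. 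As $a\in B_p(\O)\subseteq W(\O)$ satisfies $a>0$ a.e., this yields $\na u=0$ a.e. on $\O\cap\{u>0\}$. On the complementary set $\{u=0\}$ I invoke Lemma~\ref{crit} with $t=0$, which gives $\na u=0$ a.e. there as well. Thus $\na u=0$ a.e. in $\O$; since $u\in{\cal L}^{1,p}_{a,loc}(\O)\subseteq W^{1,1}_{loc}(\O)$ and $\O$ is a connected domain, $u$ is a.e. constant, as required.

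The main obstacle is precisely the passage $\s_0=-\infty\Rightarrow\na u=0$ a.e.: the condition \eqref{znakphi} controls only $\O\cap\{u>0\}$, so the limiting (blow-up) argument delivers $\na u=0$ only on that set, and one must separately handle $\{u=0\}$, where Lemma~\ref{crit} is exactly the tool that closes the gap. I also expect to take care that the $L^1_{loc}$--finiteness of $\Phi(u)b$ (part of Definition~\ref{defnier}) is what legitimises the pointwise passage $k\to\infty$, and that strict positivity a.e. of the weight $a$ is what converts $a|\na u|^p=0$ into $\na u=0$.
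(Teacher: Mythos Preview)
Your proof is correct and follows essentially the same route as the paper: both show that if ${\cal A}$ is unbounded below then the inequality $\Phi(u)b\ge |\s|\,\frac{a}{g(u)}|\na u|^p$ for arbitrarily large $|\s|$, together with a.e.\ finiteness of $\Phi(u)b$, forces $\na u=0$ a.e.\ on $\{u>0\}$. Your contrapositive presentation is slightly cleaner---you make explicit the use of Lemma~\ref{crit} on $\{u=0\}$ and of connectedness of $\O$ to conclude that $u$ is constant, whereas the paper packages the same idea as a contradiction on a set $\{a|\na u|^p\ge K_1,\ g(u)\le K_2\}$ of positive measure---but the underlying argument is the same.
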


\begin{proof} ($\Longleftarrow$) Assume that $u\not\equiv Const$. Then the set ${\cal A}$ is not empty as it contains zero, in particular $\s_0\le 0$. If $a(\cdot)>0,b(\cdot)\ge 0$ a.e.~in~$\Omega$, then the set ${\cal A}$  cannot be unbounded from below. Indeed, if ${\cal A}$ was unbounded from below, the inequality:
\[
\Phi(u(x))b(x)-\bar{n}\frac{ a(x)}{g(u(x))}|\nabla u(x)|^p\ge 0 \quad \mathrm{  a.e.\ in}\ \Omega\cap \{ u>0\}
\]
would hold for every $\bar{n}\in\n$. Consequently we could find $K_1,K_2>0$, such that
\[
\frac{1}{\bar{n}}\Phi(u(x))b(x)\ge \frac{ a(x)}{g(u(x))}|\nabla u(x)|^p\ge \frac{K_1}{K_2}>0
\]
a.e. in  $\{ u: |\nabla u|^p a(x)\ge K_1, g(u(x))\le K_2\}$, which is the set of positive measure and independent on $\bar{n}$. Taking the limit for $\bar{n}\to\infty$, we arrive at the contradiction.

($\Longrightarrow$) If $\sigma_0$ is a finite number, then $u$ cannot be constant. Indeed, for $u\equiv Const\ge 0 $, condition~\eqref{znakphi} implies ${\cal A}= (-\infty,\infty)$, which violates \eqref{s0}.
\end{proof}

\begin{rem}\rm
Assumption A,~d) is  satisfied in each of the following cases: \begin{itemize}
\item[i)] When $u$ is locally bounded.
\item[ii)] When $b\ge 0$, $u\in {\cal L}^p_{a,loc}(\O)$ and $\Psi (R)/R$ is bounded at infinity. Indeed, we have from  H\"older's inequality
\begin{eqnarray*}
Z_1(R)&:=&\Psi (R)\int_{K\cap \{ u\ge R/2\} }|\nabla u(x)|^{p-1}a(x)\, dx\\
& \le& \Psi(R)\left(  \int_{K\cap \{ u\ge R/2\} }|\nabla u(x)|^{p}a(x)\, dx \right)^{1-\frac{1}{p}} \left( \int_{K\cap \{ u\ge R/2\} }a(x)\, dx \right)^{\frac{1}{p}}\\
\end{eqnarray*}
and $Z_2(R):= \left(  \int_{K\cap \{ u\ge R/2\} }|\nabla u(x)|^{p}a(x)\, dx \right)^{1-\frac{1}{p}}\rightarrow 0$ as $R\to\infty$. On the other hand, by Czebyshev's inequality applied to $\mu(x)=a(x)dx$ on $K$, we get
\[
\int_{K\cap \{ u\ge R/2\} }a(x)\, dx =\mu(\{ x\in K :   u(x)\ge R/2\})\le \frac{2^p}{R^p}\int_K |u|^p a(x)dx =: \frac{1}{R^p} Z_3(R).
\]
Therefore, $Z_1(R)\le \frac{\Psi(R)}{R} Z_2(R)  Z_3(R)^{\frac{1}{p}}\to 0$ as $R\to\infty$.
\end{itemize}
\end{rem}

\section{Caccioppoli--type estimates}\label{CacSec}

Our first goal is to obtain the following  estimate. We call it  local, because it is stated on a part of the domain where $u$ is not bigger than a given $R$.

\begin{lem}[Local  estimate]\label{propcac} Suppose that   Assumption A holds except part d).  Assume further that $1<p<\infty$ and $u$ is a nonnegative solution to  PDI
\begin{equation}\label{dopisananierownosc}
-\Delta_{p,a} u\ge \Phi(u)b(x)
\end{equation}
 in the sense of Definition  \ref{defnier}.

Then  for any   nonnegative   Lipschitz function $\phi$ with
compact support in $\O$ such that the integral $\int_{\{\sf\cap \nabla u\neq 0\}}|\nabla\phi|^p\phi^{1-p}a(x)\,dx$ is finite and for any
 $R>0$ the inequality
\begin{eqnarray}
\label{localcaccc}
&~&\int_{\{ 0<u< R\}}\left(\Phi (u(x))b(x)
+\s\frac{a(x)}{g(u(x) )} |\nabla
u(x)|^p\right) \Psi(u(x))\phi(x)\, dx
\\
&\leq&c
\int_{\{\nabla u(x)\neq 0,\, 0<u<
R\} \cap {\rm supp}\phi}a(x)\Psi(u(x)) g^{p-1}(u(x)) |\nabla\phi (x)|^p\phi^{1-p}(x)\,dx
+\tilde{C}(R),\nonumber
\end{eqnarray}
holds, where $c:= \frac{1}{p^p}\left( \frac{p-1}{C-\sigma}\right)^{p-1}$,
\begin{equation*}
\tilde{C}(R)= \Psi(R)\left[\int_{\Oc\{
u\geq \frac{R}{2}\} }a(x)|\nabla u|^{p-1} | \nabla
{\phi}| \, dx -\int_{\Oc\{u\geq \frac{R}{2}\}}
\Phi(u)b (x) \phi\,dx\right] .
\end{equation*}
Moreover, all  quantities appearing in~\eqref{localcaccc}  are finite.
\end{lem}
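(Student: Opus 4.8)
The plan is to feed a regularized version of $w=\Psi(u)\phi$ into the weak inequality \eqref{nikfo} and then read off \eqref{localcaccc} by combining the compatibility condition \eqref{Psinier111} with Young's inequality. Since $\Psi$ may blow up as its argument tends to $0$ and $u$ need not be bounded, $\Psi(u)\phi$ is in general not an admissible competitor, so I would first truncate on both ends: near $\{u=0\}$ I replace $\Psi$ by $\Psi_\delta:=\min\{\Psi,\Psi(\delta)\}$ for $\delta>0$ (and the symmetric cut when $\Psi$ is nondecreasing), which is legitimate because ($\Psi,g$) forces $\Psi$ to be monotone; near $\{u=R\}$ I compose $u$ with a Lipschitz cut-off equal to $1$ on $\{u\le R/2\}$ and $0$ on $\{u\ge R\}$. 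The resulting $w_{\delta,R}$ is nonnegative, compactly supported, and lies in $\mathcal{L}^{1,p}_a(\Omega)$ by the chain rule for Lipschitz compositions of Sobolev functions; Lemma~\ref{crit} (so that $\nabla u=0$ a.e.\ on the level sets $\{u=0\}$, $\{u=R\}$) is what makes these truncations harmless for the gradient computations. Hence $w_{\delta,R}$ is admissible in Definition~\ref{defnier}.

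\textbf{Expansion and the role of the compatibility condition.} I then expand $\nabla w_{\delta,R}$ by the product and chain rules. The term in which the derivative lands on the $\Psi(u)$-factor produces $\int a|\nabla u|^p\Psi'(u)\phi\,(\cdots)$, and here I invoke \eqref{Psinier111} in the form $\Psi'\le -C\Psi/g$ to estimate it from above by $-C\int \frac{a}{g(u)}|\nabla u|^p\Psi(u)\phi\,(\cdots)$. Substituting into \eqref{nikfo} and transferring the source term, I arrive, on the truncated region, at
\[
\int\Bigl(\Phi(u)b+C\,\tfrac{a}{g(u)}|\nabla u|^p\Bigr)\Psi(u)\phi\,(\cdots)\le \int a|\nabla u|^{p-2}\nabla u\cdot\nabla\phi\,\Psi(u)\,(\cdots).
\]
Writing $C=\sigma+(C-\sigma)$, I keep $(C-\sigma)\int \tfrac{a}{g(u)}|\nabla u|^p\Psi\phi$ on the left as an absorbing reserve. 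This is precisely where Assumption~A,\,a) enters: $\sigma<C$ makes the reserve strictly positive, while $\sigma\ge\sigma_0$ together with \eqref{znakphi}--\eqref{s0} guarantees that the integrand $(\Phi b+\sigma\tfrac{a}{g}|\nabla u|^p)\Psi\phi$ appearing on the left of \eqref{localcaccc} is nonnegative.

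\textbf{Young's inequality and the boundary term.} On the right I bound $|\nabla u|^{p-2}\nabla u\cdot\nabla\phi\le|\nabla u|^{p-1}|\nabla\phi|$ and apply Young's inequality with exponents $p$ and $p'=p/(p-1)$, in the split with $A^{p'}=a|\nabla u|^p\Psi\phi/g$ and $B^p=a\Psi g^{p-1}|\nabla\phi|^p\phi^{1-p}$; choosing the Young parameter $\varepsilon$ so that $\varepsilon^{p'}/p'=C-\sigma$ makes the first contribution cancel the reserve exactly and yields the kept term with the sharp coefficient $\tfrac{1}{p\varepsilon^p}=\tfrac{1}{p^p}\bigl(\tfrac{p-1}{C-\sigma}\bigr)^{p-1}=c$. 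The terms coming from the transition layer $\{R/2<u<R\}$ and from the frozen value $\Psi(R)$ of the cut-off, after estimating the flux by $|\nabla u|^{p-1}|\nabla\phi|$ and collecting the source contribution over $\{u\ge R/2\}$, assemble into exactly $\widetilde C(R)$.

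\textbf{Passage to the limit and the main obstacle.} Finally I let $\delta\to0$. Monotone/dominated convergence returns the clean region $\{0<u<R\}$, and this is where I expect the real work: controlling the behaviour near $\{u=0\}$. Condition ($\Psi,g$),\,ii) (monotonicity or boundedness of $\Theta=\Psi g^{p-1}$ and of $\Psi/g$ near $0$) is what keeps the integrands $a\,\Theta(u)|\nabla\phi|^p\phi^{1-p}$ and $\sigma\,a\,\tfrac{\Psi}{g}(u)|\nabla u|^p\phi$ convergent as $\delta\to0$, while Assumption~A,\,c) is designed to absorb the residual contribution $\Psi(\delta)\,\Phi(0)\int_{\{u=0\}}b\phi$ on the set $\{u=0\}$ (killed by x) or z), or rendered harmless by the sign in y)). Finiteness of all quantities in \eqref{localcaccc} then follows from the nonnegativity of the left integrand, the finiteness hypothesis on $\int a|\nabla\phi|^p\phi^{1-p}$, the local integrability $a|\nabla u|^{p-1}\in L^1_{loc}$ and $\Phi(u)b\in L^1_{loc}$, and Assumption~A,\,b). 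The delicate steps are thus the admissibility of $w_{\delta,R}$ and the limit $\delta\to0$ near $\{u=0\}$; the algebraic core (compatibility condition plus sharp Young) is routine once the test function is in place.
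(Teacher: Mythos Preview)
Your strategy coincides with the paper's: test \eqref{nikfo} with a regularized version of $\Psi(u)\phi$, invoke $\Psi'\le -C\Psi/g$ to turn the $\Psi'$--term into a reserve $-C\int a|\nabla u|^p\Psi/g\,\phi$, split $C=\sigma+(C-\sigma)$, and absorb the cross term by Young's inequality tuned so that the $|\nabla u|^p$--piece cancels the reserve, which is exactly how the sharp constant $c$ drops out. The only difference is the regularization itself. The paper uses the single test function $G=\Psi\bigl(\min\{u+\delta,R\}\bigr)\phi$: the shift $u\mapsto u+\delta$ keeps the argument of $\Psi$ in the compact range $[\delta,R]$ (so local Lipschitz continuity of $\Psi$ suffices for $G\in{\cal L}^{1,p}_a$), and the hard cap at $R$ freezes $G\equiv\Psi(R)\phi$ on $\{u>R-\delta\}$, which is precisely what produces the remainder with the exact prefactor $\Psi(R)$ and hence the stated $\tilde C(R)$. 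Your variant---capping $\Psi$ at $\Psi(\delta)$ and multiplying by a cut-off $\theta(u)$ that vanishes on $\{u\ge R\}$---is also legitimate, but note a small inconsistency in your description: a cut-off with $\theta\equiv 0$ on $\{u\ge R\}$ leaves no ``frozen value $\Psi(R)$'' at all, and the transition-layer terms it generates (involving $\Psi(u)\theta(u)$ and an extra $a|\nabla u|^p\Psi(u)\theta'(u)\phi$, the latter having a favourable sign and hence discardable) do not literally assemble into the $\tilde C(R)$ of the lemma; you would get an equally usable remainder for the later limit $R\to\infty$, but not the exact expression claimed here. Your outline of the passage $\delta\to 0$ and of the roles of Assumptions~A,\,b),\,c) and $(\Psi,g)$,\,ii) matches the paper's Steps~2--4.
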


The above result implies the following global estimate~\eqref{caccc} for  solutions to \eqref{dopisananierownosc}. It may be used to analyze various qualitative properties of them. We call it {\emph{Caccioppoli--type inequality}}, because  the right--hand side in~\eqref{caccc} does not
 involve  $\nabla u$ when we estimate $\chi_{\{ \nabla u\neq 0\}}$ by $1$, while, on the other hand, the left--hand side does involve $\nabla u$.

\begin{theo}[Caccioppoli--type estimate]
\label{corocac} Suppose that Assumption A  holds, $1<p<\infty$
 and $u
\in {\cal L}^{1,p}_{a, loc}(\O)$ is a nonnegative solution to the PDI  $$-\Delta_{p,a} u\ge \Phi(u)b(x)$$
in the sense of Definition  \ref{defnier}.

Then
for every nonnegative Lipschitz function $\phi$ with
compact support in $\O$ such that the integral $\int_{\sf}{|\nabla\phi|}^p\phi^{1-p}a(x)\, dx$ is finite,
we have  \begin{eqnarray}
&~& \int_{\O\cap \{ u>0\} } \left(\Phi(u(x))b(x) +\sigma{|\nabla u(x)|^p}\frac{a(x)}{g(u(x))}\right) \Psi(u(x))\phi (x)  dx \le~~~~~~~~~~~~~~~~\label{caccc}\\
 &&~~~~~~~~~~~~~~ c \int_{\O\cap \{ u(x)> 0,\nabla u(x)\neq 0\}\cap {\rm supp}\phi }
\,a(x)\Psi(u(x))g^{p-1}(u(x))|\nabla\phi (x)|^p\phi(x)^{1-p}dx,\nonumber
\end{eqnarray} with $c=\frac{(p-1)^{p-1}}{p^p(C-\s)^{p-1}}$.
\end{theo}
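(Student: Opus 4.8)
The plan is to obtain the global estimate \eqref{caccc} from the local one \eqref{localcaccc} by letting $R\to\infty$. First I would observe that Theorem~\ref{corocac} assumes all of Assumption~A, and that its finiteness requirement $\int_{\sf}|\nabla\phi|^p\phi^{1-p}a\,dx<\infty$ is stronger than the one in Lemma~\ref{propcac}, where the integral runs only over $\{\sf\cap\nabla u\neq0\}$; since the integrand is nonnegative, the former finiteness implies the latter. Hence Lemma~\ref{propcac} applies to the same $\phi$ for every $R>0$. Writing $\mathrm{LHS}(R)$ and $\mathrm{RHS}(R)$ for the two integrals in \eqref{localcaccc}, this gives
\[
\mathrm{LHS}(R)\le c\,\mathrm{RHS}(R)+\tilde{C}(R),
\]
with $c=\frac{(p-1)^{p-1}}{p^p(C-\sigma)^{p-1}}=\frac{1}{p^p}\left(\frac{p-1}{C-\sigma}\right)^{p-1}$ and $\tilde C(R)$ as displayed. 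It remains to pass to the limit in all three terms.

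For the left-hand side the decisive point is nonnegativity of the integrand. Since $\sigma\ge\sigma_0=\inf{\cal A}$ and ${\cal A}$ is upward closed (because $\tfrac{a}{g(u)}|\nabla u|^p\ge0$), every $\sigma>\sigma_0$ lies in ${\cal A}$; moreover $\sigma_0\in{\cal A}$ as well, by passing to the a.e. limit in \eqref{znakphi} along a sequence $\sigma_k\downarrow\sigma_0$. Thus \eqref{znakphi} holds for our $\sigma$, so $\Phi(u)b+\sigma\tfrac{a}{g(u)}|\nabla u|^p\ge0$ a.e. on $\Omega\cap\{u>0\}$, and since $\Psi>0$, $\phi\ge0$, the whole integrand is nonnegative. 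As $R\to\infty$ the sets $\{0<u<R\}$ increase to $\{u>0\}$ (recall $u<\infty$ a.e.), so by monotone convergence $\mathrm{LHS}(R)$ increases to the left-hand side of \eqref{caccc}.

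The right-hand side is treated identically: its integrand $a\Psi(u)g^{p-1}(u)|\nabla\phi|^p\phi^{1-p}$ is nonnegative, and the domains $\{\nabla u\neq0,\,0<u<R\}\cap\sf$ increase to $\{u>0,\,\nabla u\neq0\}\cap\sf$, so $\mathrm{RHS}(R)$ increases to the right-hand side of \eqref{caccc}, possibly to $+\infty$ (in which case the assertion is trivial). Combining the two monotone limits yields $\mathrm{LHS}\le c\,\mathrm{RHS}+\limsup_{R\to\infty}\tilde C(R)$, so the theorem follows once $\tilde C(R)\to0$.

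That final step is the main obstacle, and it is precisely where the extra Assumption~A,~d) enters, applied with $K=\sf$. Because $\phi$ is Lipschitz with compact support, $|\nabla\phi|\le\|\nabla\phi\|_{\infty}$ and $0\le\phi\le\|\phi\|_{\infty}$ on $K$, so the first bracket in $\tilde C(R)$ is dominated by $\|\nabla\phi\|_{\infty}\,\Psi(R)\int_{K\cap\{u\ge R/2\}}a|\nabla u|^{p-1}\,dx$, which vanishes by the first limit in d). The second bracket is governed by the second limit in d); the only delicate issue is that $\Phi(u)b$ need not keep a fixed sign, so instead of pulling $\phi$ out of a signed integral I would split $b=b^{+}-b^{-}$ (equivalently, argue separately on $\{b\ge0\}$ and $\{b<0\}$) and combine d) with $\Phi(u)b\in L^1_{loc}(\Omega)$ and the boundedness of $\phi$. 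Hence $\tilde C(R)\to0$, which completes the proof.
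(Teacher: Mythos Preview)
Your strategy is exactly the paper's: invoke Lemma~\ref{propcac} for every $R>0$, pass to the limit $R\to\infty$ using monotone convergence on both sides, and show $\tilde C(R)\to 0$. The one difference is that the paper splits the last step according to the monotonicity of $\Psi$. When $\Psi$ is nonincreasing, $\Psi(R)$ stays bounded, and since $a|\nabla u|^{p-1}|\nabla\phi|$ and $\Phi(u)b\,\phi$ are integrable on $\sf$, the integrals over $\{u\ge R/2\}$ already tend to zero; Assumption~A,~d) is not used here at all. Only when $\Psi$ is increasing does the paper invoke d).

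This matters because your treatment of the second bracket has a small gap. Splitting $b=b^{+}-b^{-}$ does not help: Assumption~A,~d) controls $\Psi(R)\int_{K\cap\{u\ge R/2\}}\Phi(u)b\,dx$, not the positive and negative parts separately, so you cannot bound $\Psi(R)\int\Phi(u)b\,\phi$ by $\|\phi\|_\infty\,\Psi(R)\int\Phi(u)|b|$ and still appeal to d); and the $L^1_{loc}$ argument alone gives only $\int_{\{u\ge R/2\}}\Phi(u)b\,\phi\to 0$, which is not enough if $\Psi(R)\to\infty$. The paper's case split resolves this cleanly. (One can also observe that in the increasing case $g\Psi'\le -C\Psi$ with $\Psi'\ge 0$ forces $C\le 0$, hence $\sigma<C\le 0$, and then $\sigma\in\mathcal A$ with $\sigma<0$ yields $\Phi(u)b\ge 0$ a.e.\ on $\{u>0\}$, so $\phi$ may indeed be pulled out and d) applies directly.)
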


\begin{rem}\rm
Our assumptions do not exclude the case when measure $\left(\Phi(u(\cdot))b(\cdot) +\sigma{|\nabla u(\cdot)|^p}\frac{a(\cdot)}{g(u(\cdot))}\right) \Psi(u(\cdot))\chi_{ \{ u> 0\} }$ is equal to zero.
\end{rem}

\subsection*{Proof of the local estimates}

We use the following simple observations (see~\cite{plap}).

\begin{lem}  \label{lemmjednor}
Let $p>1,\ \t>0$ and $s_1,s_2\ge 0$, then
\[s_1s_2^{p-1}\le \frac{1}{p\t^{p-1}}\cdot  s_1^p +\frac{p-1}{p} \t\cdot s_2^p .\]
\end{lem}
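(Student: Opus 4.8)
The final statement to prove is Lemma \ref{lemmjednor}: for $p>1$, $\tau>0$, and $s_1,s_2\ge 0$,
\[
s_1 s_2^{p-1}\le \frac{1}{p\tau^{p-1}}\,s_1^p+\frac{p-1}{p}\,\tau\,s_2^p.
\]

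\textbf{Approach.} This is a weighted version of Young's inequality, so the plan is to reduce it to the standard Young inequality $xy\le \frac{x^{p}}{p}+\frac{y^{q}}{q}$ with conjugate exponents $p$ and $q=\frac{p}{p-1}$ (so that $\frac{1}{p}+\frac{1}{q}=1$). First I would dispose of the trivial case: if either $s_1=0$ or $s_2=0$ the left-hand side is $0$ and the right-hand side is nonnegative, so the inequality holds; hence I may assume $s_1,s_2>0$.

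\textbf{Key steps.} With conjugate exponents $p$ and $q=\frac{p}{p-1}$, I would apply Young's inequality to the factors $x=\tau^{-(p-1)/p}\,s_1$ and $y=\tau^{(p-1)/p}\,s_2^{p-1}$, chosen so that the product $xy=s_1 s_2^{p-1}$ reproduces exactly the left-hand side (the powers of $\tau$ cancel). This gives
\[
s_1 s_2^{p-1}=xy\le \frac{x^{p}}{p}+\frac{y^{q}}{q}
=\frac{1}{p}\,\tau^{-(p-1)}s_1^{p}+\frac{p-1}{p}\,\tau^{(p-1)q/p}s_2^{(p-1)q}.
\]
It remains to simplify the second term: since $q=\frac{p}{p-1}$, one computes $(p-1)q=p$ and $\frac{(p-1)q}{p}=1$, so $y^{q}/q=\frac{p-1}{p}\,\tau\,s_2^{p}$, which is precisely the claimed right-hand side. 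Matching $\frac{1}{p}\tau^{-(p-1)}=\frac{1}{p\tau^{p-1}}$ finishes the identification.

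\textbf{Main obstacle.} There is no genuine analytic difficulty here; the only thing to be careful about is the exact bookkeeping of the exponents of $\tau$, namely verifying that the splitting $x=\tau^{-(p-1)/p}s_1$, $y=\tau^{(p-1)/p}s_2^{p-1}$ makes the $\tau$-powers cancel in the product while producing exactly $\tau^{-(p-1)}$ and $\tau^{1}$ after raising to the $p$th and $q$th powers respectively. Alternatively, if one prefers to avoid choosing the split explicitly, the inequality can be proved by fixing $s_2$ and $\tau$ and minimizing the right-hand side over $s_1$ (or rather, viewing it as a function whose tangent-line bound at the optimal ratio yields the estimate), but the direct substitution into Young's inequality is the cleanest route and I would present that.
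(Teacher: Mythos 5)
Your proof is correct: the $\tau$-bookkeeping checks out, since with $q=\frac{p}{p-1}$ the split $x=\tau^{-(p-1)/p}s_1$, $y=\tau^{(p-1)/p}s_2^{p-1}$ gives $xy=s_1s_2^{p-1}$, $\frac{x^p}{p}=\frac{s_1^p}{p\tau^{p-1}}$ and $\frac{y^q}{q}=\frac{p-1}{p}\tau s_2^p$ exactly. The paper itself offers no proof of Lemma~\ref{lemmjednor} (it is quoted as a simple observation from~\cite{plap}), and your reduction to the standard Young inequality is precisely the argument intended there, so nothing further is needed.
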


\begin{lem}\label{niko-dym}
 Let $u,\phi$ be as in the assumptions of Lemma~\ref{propcac}. We fix
 $0<\delta<R$  and denote
\begin{eqnarray}\label{skalowanie}
u_{\delta,R}(x):= {\rm min}\left\{ u(x)+\delta, R \right\},
 & &  {G}(x):= \Psi(u_{\delta,R}(x)) \phi (x).
\end{eqnarray}
Then $u_{\delta,R}\in
{\cal L}^{1,p}_{a,loc}(\Omega )$, $ {G}\in 
{\cal L}^{1,p}_{a}(\Omega )$ and $ {G}$ is compactly supported in $\Omega$.\end{lem}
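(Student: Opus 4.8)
The plan is to verify the three assertions separately, treating $u_{\delta,R}$ first, then the boundedness of the composition $\Psi(u_{\delta,R})$, and finally the product $G$.

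First I would show $u_{\delta,R}\in\mathcal{L}^{1,p}_{a,loc}(\Omega)$. Since $u\in\mathcal{L}^{1,p}_{a,loc}(\Omega)\subseteq W^{1,1}_{loc}(\Omega)$, the shifted function $u+\delta$ lies in $W^{1,1}_{loc}(\Omega)$, and the truncation $u_{\delta,R}=\min\{u+\delta,R\}$ stays in $W^{1,1}_{loc}(\Omega)$. The standard truncation rule gives $\nabla u_{\delta,R}=\nabla u\cdot\chi_{\{u<R-\delta\}}$ almost everywhere; on the level set $\{u+\delta=R\}=\{u=R-\delta\}$ the two candidate values agree because $\nabla u=0$ a.e.\ there by Lemma~\ref{crit}. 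In particular $|\nabla u_{\delta,R}|\le|\nabla u|$ a.e., so for any $\Omega'$ with $\overline{\Omega'}$ compact in $\Omega$ one has $\int_{\Omega'}|\nabla u_{\delta,R}|^p a\,dx\le\int_{\Omega'}|\nabla u|^p a\,dx<\infty$, which is exactly membership in $\mathcal{L}^{1,p}_{a,loc}(\Omega)$.

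The key structural observation, used throughout, is that since $u\ge 0$ and $0<\delta<R$, the function $u_{\delta,R}$ takes values in the closed interval $[\delta,R]\subset(0,\infty)$. By hypothesis $\Psi$ is Lipschitz on $[\delta,R]$, hence $\Psi\circ u_{\delta,R}$ is bounded, say by $M:=\max_{[\delta,R]}\Psi$, and $\Psi'$ is essentially bounded on $[\delta,R]$ by the Lipschitz constant $L$. This confinement away from $0$ is what makes the singular behaviour of $\Psi$ near the origin (as in the examples of Table~\ref{table}) irrelevant. Compact support of $G$ is then immediate: $G=\Psi(u_{\delta,R})\phi$ vanishes wherever $\phi$ does, so $\mathrm{supp}\,G\subseteq\mathrm{supp}\,\phi$, which is a compact subset of $\Omega$.

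It remains to prove $G\in\mathcal{L}^{1,p}_a(\Omega)$. I would first invoke the chain rule for a Lipschitz function composed with a Sobolev function to get $\nabla\Psi(u_{\delta,R})=\Psi'(u_{\delta,R})\nabla u_{\delta,R}$ a.e.\ (with the convention that this product is $0$ where $\nabla u_{\delta,R}=0$), and then the product rule, valid since both factors lie in $W^{1,1}_{loc}\cap L^\infty$, to write
\[
\nabla G=\phi\,\Psi'(u_{\delta,R})\,\nabla u\,\chi_{\{u<R-\delta\}}+\Psi(u_{\delta,R})\,\nabla\phi .
\]
The first term is dominated pointwise by $\|\phi\|_\infty\,L\,|\nabla u|\,\chi_{\mathrm{supp}\,\phi}$, whose $p$-th power is $a$-integrable because $\int_{\mathrm{supp}\,\phi}|\nabla u|^p a\,dx<\infty$ (as $u\in\mathcal{L}^{1,p}_{a,loc}(\Omega)$ and $\mathrm{supp}\,\phi$ is compact). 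The second term is dominated by $M\,\|\nabla\phi\|_\infty\,\chi_{\mathrm{supp}\,\phi}$, and $\int_{\mathrm{supp}\,\phi}a\,dx<\infty$ since $a\in L^1_{loc}(\Omega)$; here only local integrability of $a$ is needed, not the finer weighted bound on $\phi$ from Lemma~\ref{propcac}. Summing, $\nabla G\in L^p_a(\Omega)$, giving $G\in\mathcal{L}^{1,p}_a(\Omega)$.

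I expect the only genuinely delicate point to be the justification of the two differentiation rules for these weighted Beppo--Levi functions: the truncation formula for $\min\{u+\delta,R\}$ (where Lemma~\ref{crit} removes the ambiguity on the level set) and the chain rule $\nabla\Psi(u_{\delta,R})=\Psi'(u_{\delta,R})\nabla u_{\delta,R}$ for merely Lipschitz $\Psi$. Once these almost-everywhere identities are in hand, the integrability estimates are routine and rely only on the compactness of $\mathrm{supp}\,\phi$ together with $u\in\mathcal{L}^{1,p}_{a,loc}(\Omega)$ and $a\in L^1_{loc}(\Omega)$.
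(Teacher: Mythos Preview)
Your argument is correct and follows precisely the route the paper intends: the paper does not supply a detailed proof of this lemma but only the hints in Remark~\ref{koment}, namely the Nikodym ACL characterization (to reduce membership in ${\cal L}^{1,p}_{a,loc}$ to checking $W^{1,1}_{loc}$ plus a.e.\ derivatives in $L^p_{a,loc}$), the local Lipschitz property of $\Psi$ for the chain rule, and the nonnegativity of $u$ to confine $u_{\delta,R}$ to $[\delta,R]$. Your write-up fills in exactly these details.
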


\begin{rem}\label{koment}$ $\rm
\begin{itemize}
\item[i)] We know that ${\cal L}^{1,p}_{a,loc}(\Omega)\subseteq W^{1,1}_{loc}(\Omega)$. This inclusion, together with Nikodym ACL Characterization Theorem~\cite[Section~1.1.3]{ma}, implies that we can verify if the function belongs to Sobolev space ${\cal L}^{1,p}_{a,loc}(\Omega)$
    by checking that it belongs to $W^{1,1}_{loc}(\Omega)$ and that
its derivatives computed almost everywhere belong to $L^p_{a,loc}(\Omega)$.
The fact that $\Psi$ is locally Lipschitz is used to apply Lemma~\ref{niko-dym} in order to ensure that $\Psi(u_{\delta,R}(x))$ belongs to $W^{1,1}_{loc}(\Omega)$.
\item[ii)]The nonnegativity of function $u$ allows to deduce that $ {G}\in
{\cal L}^{1,p}_{a}(\Omega )$. This fact  plays the crucial role in the proof of Lemma~\ref{propcac}.
\end{itemize}

\end{rem}

\subsubsection*{Proof of Lemma~\ref{propcac}}
We present the proof under the assumption that the set $\Omega_0$ in Assumption {  A},~c) has a positive measure. The proof in the case $u>0$ a.e.  follows by the simplification of the presented arguments.

\smallskip

 Let the quantities $\Phi,\Psi,g,a,b,u$ be as in \eqref{dopisananierownosc} and Assumption A, while $\phi$ be as in the statement of the lemma.

\noindent
The proof is performed in four  steps:

{\sc Step 1.} We prove that
for every $0<\delta <R$, the inequality
\begin{eqnarray}
 &~&\int_{\{ \O\cap u\le R-\delta \}}\left(\Phi(u)b(x) +\sigma\frac{a(x)}{g\left( u+\delta \right)}|\nabla u|^p\right){\Psi\left( u+\delta \right)}\phi~dx
  \nonumber\\~&\le&
c\int_{\O\cap \sf\cap \{ \nabla u\neq 0,\, 0<u\le R-\delta \}}a(x)
\Psi(u+\delta) g^{p-1}(u+\delta)
\left(\frac{|\nabla\phi|}{\phi}\right)^p\phi\,  dx \nonumber\\&+&
\tilde{C}(\delta,R) \ ~~~~~~~~~~ \label{cacccs1}
\end{eqnarray}
holds with $\sigma$ from Assumption A,~a) and
\[ \tilde{C}(\delta,R)=\Psi(R)\left[\int_{\Oc
\{ \nabla u\neq 0,\, u> R-\delta \} }a(x)|\nabla u|^{p-2} \nabla
u\cdot \nabla {\phi} \,  dx- \int_{\Oc\{u>R-\delta\}}
\Phi(u)b (x) \phi\,dx\right].
\]

{\sc Step 2.} We pass to the limit for $\delta\searrow 0$ and obtain
\begin{eqnarray*}
&~&\limsup_{\delta\searrow 0} c\int_{ \sf\cap \{ \nabla u\neq 0,\,  u\le R-\delta \}}a(x)
\Psi(u+\delta) g^{p-1}(u+\delta)
\left(\frac{|\nabla\phi|}{\phi}\right)^p\phi\,  dx +
\tilde{C}(\delta,R) \nonumber\\
&\le& c
\int_{\sf\cap \{\nabla u(x)\neq 0,\,0<u<
R\}}a(x)\Psi(u(x)) g^{p-1}(u(x))\, |\nabla\phi (x)|^p\phi^{1-p}(x)\,dx
+\tilde{C}(R).
\end{eqnarray*}

{\sc Step 3.}  For $\delta\geq 0$ we denote
\begin{eqnarray}\label{A_delta}
A_\delta (x) :=& \left(  \Phi (u)b(x) +\sigma \frac{a(x)}{g(u+\delta)} |\nabla u|^p\right)\Psi (u+\delta)\quad {\rm when}\ \delta >0,\\
A_0 (x) :=& \left(  \Phi (u)b(x) +\sigma \frac{a(x)}{g(u)} |\nabla u|^p\right)\Psi (u)\chi_{\{ u>0\} }\quad {\rm when}\ \delta =0.\nonumber
\end{eqnarray}

We show that
\begin{eqnarray*}
\liminf_{\delta\searrow 0}\int_{\{ 0< u\le R-\delta \}} A_\delta (x)\phi (x)dx \ge \int_{\{ 0< u< R \}} A_0 (x)\phi (x)dx.
 \end{eqnarray*}

{\sc Step 4.} We show that
\begin{eqnarray}
  \liminf_{\delta\searrow 0} \int_{\{ u=0 \}}A_\delta (x)\phi (x)dx \ge  0,\label{czwartykrok}
 \end{eqnarray}
which  implies the statement.

\subsubsection*{Proof of  {Step 1}}

Let us introduce the following notation for $J_i=J_i(\delta,R)$, $i=1,\dots,6$:
\begin{eqnarray*}
 J_1&=&  \int_{ \Oc\{ 0< u\le R-\delta\} }a(x) |\nabla u|^p {\Psi} {'}(u +\delta)\phi\,dx,\\
J_2&=&
\int_{ \Oc\{ 0<
u\le R-\delta \}}a(x)|\nabla u|^p\frac{\Psi\left( u+\delta \right)}{g\left( u+\delta \right)}\phi\,  dx,\\
J_3&=&
\int_{ \Oc\{ 0< u\le R-\delta \} }a(x)|\nabla
u|^{p-2} \, \Psi
(u+\delta)\, \nabla u\cdot \nabla {\phi} \, dx,\\
J_4&=&
\Psi(R)\int_{\Oc\{u>R-\delta\}}\Phi(u)b(x) \phi\,dx,\\
J_5&=&
\Psi(R)\int_{ \Oc\{ u> R-\delta
\} }a(x)|\nabla u|^{p-2}\nabla u\cdot \nabla {\phi} \,   dx,\\
J_6&=& \int_{ \sf\cap \{ \nabla u\neq 0,\, 0< u\le R-\delta \}}a(x)
\left(\frac{|\nabla\phi|}{\phi}\right)^p
\Psi(u+\delta) g^{p-1}(u+\delta)\phi\,  dx.
\end{eqnarray*}
By our assumptions all the above quantities are finite  (for $0 \leq u \leq R-\delta$ we have $\delta\leq u+\delta\leq R$).
Let $G$ be given by \eqref{skalowanie}. Choose $w:=G$ to be a test function in \eqref{nikfo}. Then the right hand side of \eqref{nikfo} becomes
\begin{eqnarray}
I&:=& \int_\O \Phi(u)b(x)   {G(x)}\,dx = \int_\O
\Phi(u)b(x) \Psi (u_{\delta,R})\phi\,dx=\nonumber\\
&=& \int_{\Oc\{u\leq R-\delta\}} \Phi(u)b(x)  \Psi(u+\delta)\phi
\,dx+
\Psi(R)\int_{\Oc\{u>R-\delta\}}\Phi(u)b(x) \phi\,dx=\nonumber\\
&=& \int_{\Oc\{u\leq
R-\delta\}}\Phi(u)b(x)  \Psi(u+\delta)\phi\,dx+J_4,\label{Idef}
\end{eqnarray}
so that $I$ is finite. Thus using \eqref{nikfo} we get the following estimate
\begin{eqnarray*}
I&=& \int_{\O} \Phi (u(x))b(x){G(x)}\,dx \le
\langle
-\mathrm{div}\left(a(x)|\na u|^{p-2}\na u\right), {G}\rangle \\
&=&\int_{\Oc\{\nabla u\neq 0\}}a(x) |\nabla u|^{p-2}\, \nabla u\cdot \nabla {G} \,  dx\\
& {=}&  \int_{ \Oc\{ \nabla u\neq 0,\, u\le R-\delta\} }a(x) |\nabla u|^p {\Psi}{'}(u +\delta)\phi\,dx+\\
&&+\int_{ \Oc\{ \nabla u\neq 0,\, u\le R-\delta \} }a(x)|\nabla
u|^{p-2}
\Psi(u+\delta)\, \nabla u\cdot \nabla {\phi} \,  dx +\\
&&+\Psi(R)\int_{\Oc \{ \nabla u\neq 0,\, u> R-\delta
\} }a(x)|\nabla u|^{p-2}\nabla u\cdot \nabla {\phi} \,   dx =J_1+J_3+J_5\\
&\le&
-CJ_2 +J_3+J_5.
\end{eqnarray*}
The last inequality follows from $J_1\le -CJ_2$ which holds due to
 \eqref{Psinier111}. Moreover,
\begin{eqnarray*}
 J_3&\le& \int_{ \Oc\{
\nabla u\neq 0,\, u\le R-\delta \}} a(x)| \nabla u|^{p-1} |
\nabla \phi |{\Psi (u+\delta)}\, dx=\\&=&
\int_{\sf\cap \{ \nabla u\neq 0,\, u\le R-\delta \}}
 \left(
\frac{|\nabla\phi|}{\phi} { g(u+\delta)} \right)  | \nabla u|^{p-1}a(x)
\frac{\Psi(u+\delta)}{g(u+\delta)} \,\phi dx.
\end{eqnarray*} We  apply  Lemma~\ref{lemmjednor} with $s_1=\frac{|\nabla\phi|}{\phi} g(u+\delta)$,
$s_2=|\nabla u| $ and arbitrary $\t>0$,  to get
\begin{eqnarray*}
J_3\le 
&& \frac{p-1}{p} \t\int_{ \sf\cap \{  \nabla u\neq
0,\, u\le R-\delta \}} a(x)|\nabla u|^p
\frac{\Psi(u+\delta)}{g(u+\delta)} \phi \, dx+
\\&+&\frac{1}{p\t^{p-1}}\int_{ \sf\cap \{ \nabla u\neq 0,\, u\le R-\delta \}}a(x)
\left(\frac{|\nabla\phi|}{\phi}\right)^p
\Psi(u+\delta) g^{p-1}(u+\delta)\phi\,  dx.\\
\le&&  \frac{p-1}{p} \t J_2
+\frac{1}{p\t^{p-1}} J_6.
\end{eqnarray*}
Combining these estimates we {deduce} that for $\tau >0$ such that $C - \frac{p-1}{p} \t=\s$ we have
\begin{eqnarray*}
I &\le&
-CJ_2+J_3+J_5\le\\
\ &\le&
\left(-C + \frac{p-1}{p} \t\right)J_2+\frac{1 }{p\t^{p-1}}J_6 + J_5 = -\sigma J_2+\frac{1 }{p\t^{p-1}}J_6 + J_5  .
\end{eqnarray*}
The last inequality and \eqref{Idef}
imply
 \begin{eqnarray*}
\int_{\Oc\{u\leq R-\delta\}}\Phi(u)b(x) \Psi(u+\delta)\phi\,dx+\sigma J_2
 \leq \frac{1}{p\t^{p-1}}J_6+J_5-J_4,
%
\end{eqnarray*}
 which implies~\eqref{cacccs1}, because $\tilde C(\delta,R)\ge J_5-J_4$ and $\tau = (C-\sigma)\frac{p}{p-1}$.

Introduction of parameters $\delta$ and $R$ is necessary as we
need  to move the quantities  $J_2,J_4$ in the estimates to the opposite sides
of inequalities. For doing this we have to know that
they are finite.

\subsubsection*{Proof of Step 2}
We show first that under our assumptions, when
 $\delta \searrow 0$, we have \begin{eqnarray}
&~&\int_{\sf\cap\{ \nabla u\neq 0,\,u+\delta\le R\} }a(x)\Psi(u+\d) g^{p-1}(u+\d) |\nabla\phi|^p\phi^{1-p}\,dx\label{deltadozera1}\\
&& ~~~~~~~~~~~~~~~~~~~~~~~~~~~~~~~~~~\longrightarrow\int_{\sf\cap \{ \nabla u\neq 0,\,0<u\le R\} }a(x)\Psi(u) g^{p-1}(u) |\nabla\phi|^p\phi^{1-p}\,dx. \nonumber
\end{eqnarray}
To verify this  we note that for a.e. $x\in\Omega$ we have
  \[
   \Psi(u(x)+\d) g^{p-1}(u(x)+\d)\chi_{\{\nabla u(x)\neq 0, u(x)
  +\delta\le R\}}\stackrel{\delta\to 0}{\to}  \Psi(u(x)) g^{p-1}(u(x))\chi_{\{ \nabla u(x)\neq 0,0<u(x)\le R\}}.
   \]
Indeed, when $0<u(x)<R$ or  $u(x)>R$    this follows from  the continuity  of the involved functions, while
   according to Lemma~\ref{crit}  the set $\{x: u(x)=0,\ |\nabla u(x)| \neq 0\}\cup \{x: u(x)=R,\ |\nabla u(x)| \neq 0\}$ is of measure zero.

For the proof of  \eqref{deltadozera1} we recall the nonnegative function $\Theta(t):=  \Psi(t) g^{p-1}(t)$ given by~\eqref{Theta}, which is  nonincreasing or bounded  in the neighbourhood of zero.

 Let us start with the first case, i.e. there exists $\ve>0$ such that for $t<\ve$ the function $\Theta(t)$ is nonincreasing. Without loss of generality we may assume $2\delta\le \ve\le R$ and
\[E_\ve= \left\{ \nabla u\neq 0, u< \frac{\ve}{2}\right\}\cap {\rm supp}\phi, \quad F_\ve= \left\{  \nabla u\neq 0,\frac{\ve}{2}\le u\right\}\cap {\rm supp}\phi. \]
Then we have
\[\int_{\sf\cap\{ \nabla u\neq 0,\,u+\delta\le R\} }\Theta(u+\delta)  a(x)|\nabla\phi|^p\phi^{1-p}\,dx=\]\[=\int_{E_\ve}\Theta(u+\delta)  a(x)|\nabla\phi|^p\phi^{1-p}\,dx+\int_{F_\ve}\Theta(u+\delta)\chi_{\{u+\delta\le R\}}  a(x)|\nabla\phi|^p\phi^{1-p}\,dx.\]

Let  us concentrate on the integral on $E_\ve$. We consider   $\delta<\ve/2$, so on $E_\ve$ we have $u+\delta<\ve$.
Note that mapping $t\mapsto \Theta(t)$ is nonincreasing for $t\in(0,\ve)$. For $\delta\searrow 0$ functions $\Theta_\delta (x):=\Theta(u(x)+\delta)$  converge to $\Theta(u(x))$ for almost every $x$. Therefore, due to  Lebesgue's Monotone Convergence Theorem we obtain
\[\lim_{\delta\searrow 0}\int_{E_\ve}\Theta(u+\delta)  a(x)|\nabla\phi|^p\phi^{1-p}\,dx=\int_{E_\ve}\Theta(u)  a(x)|\nabla\phi|^p\phi^{1-p}\,dx.\]

To deal with integrals over $F_\ve$ we note that
\[\Theta(u+\delta)\chi_{\{u+\delta\le R\}}  a(\cdot)|\nabla\phi|^p\phi^{1-p}\chi_{F_\ve}\le \chi_{\{ \ve/2 \leq u+\delta \leq R\}\cap {\rm supp}\phi}\Theta(u+\delta)  a(\cdot)|\nabla\phi|^p\phi^{1-p}\le\]\[\leq
 \sup_{t\in[\ve/2,R]}\Theta(t)\chi_{\sf}a(\cdot)|\nabla\phi|^p\phi^{1-p}\in L^1(\Omega).\]
Application of  Lebesgue's Dominated Convergence Theorem yields\[\lim_{\delta\searrow 0}\int_{F_\ve}\Theta(u+\delta)\chi_{\{u+\delta\le R\}}  a(x)|\nabla\phi|^p\phi^{1-p}\,dx=\int_{F_\ve\cap\{u< R\}}\Theta(u)  a(x)|\nabla\phi|^p\phi^{1-p}\,dx.\]
This completes the case of $\Theta$ decreasing in the neighbourhood of $0$.
The case of bounded~$\Theta$ follows from Lebesgue's Dominated Convergence Theorem (cf. as above for integral over
 $F_\ve$ with $\ve=0$).

To complete the proof of Step~2   we note that for $\delta\leq\frac{R}{2} $ we
have $\tilde C(\delta,R)\leq \tilde C(R).$

 \subsubsection*{Proof of Step 3}
We note that, when $A_\delta (x)$ is given by~\eqref{A_delta}, we have $A_\delta (x)\to A_0(x)$ a.e. in $\Omega_0$ as $\delta\searrow 0$, but we do not have information about the sign of $A_\delta$. Therefore we cannot apply for example Lebesgue's Dominated Convergence Theorem directly to justify the convergence of the integrals.
 Thus we distinguish between two cases: when $\sigma \ge 0$ and when $\s <0$. In both cases we prove the statement under each of the restrictions below
 on $\Psi$ and $\Psi/g$. They cover all the cases
 in Condition $(\Psi,g)$.\\
{\bf 3a)} $\Psi$ is nonincreasing and $\Psi/g$ is nonincreasing;\\
{\bf 3b)}  $\Psi$ is increasing and $\Psi/g$ is nonincreasing;\\
{\bf 3c)}  $\Psi$ is nonincreasing and $\Psi/g$ is bounded in some neighbourhood of $0$;\\
{\bf 3d)}   $\Psi$ is increasing and $\Psi/g$ is bounded in some neighbourhood of $0$.

\smallskip


 {\sc Case $\s\ge 0$.} In this case $\Psi$ is decreasing because $0\le \sigma <C$ by Assumption~A,~a). Therefore, we consider restrictions~3a) and~3c) only.

 Let us start with restriction 3a).
  Then $\Psi (u+\delta)\le \Psi (u)$, $\sigma\frac{\Psi (u+\delta)}{g(u+\delta)}\le \sigma\frac{\Psi (u)}{g(u)}$.
  Set
 \begin{eqnarray}
B_\delta(x):=  \left( b^{+}(x) \Phi(u) + \sigma\frac{a(x)}{g(u+\delta)}|\nabla u|^p\right)\Psi\left( u+\delta \right)\label{B_delta}.
\end{eqnarray}
Then $B_\delta \ge 0$ and we have
\begin{eqnarray*}
A_{\delta} (x) &=&   \left( b^{+}(x) \Phi(u) + \sigma\frac{a(x)}{g(u+\delta)}|\nabla u|^p\right){\Psi\left( u+\delta \right)}  +  b^{-}(x) \Phi(u)\Psi(u+\delta)\phi \nonumber\\
& =&  B_\delta (x) +  b^{-}(x) \Phi(u)\Psi(u+\delta)\ge  B_\delta (x) +  b^{-}(x) \Phi(u)\Psi(u).
\end{eqnarray*}
  Lebesgue's Monotone Convergence Theorem yields
\[
\lim_{\delta\searrow 0}\int_{\{ 0<u\le R-\delta \}} B_\delta (x)\phi (x)dx = \int_{\{ 0<u< R\}}  \left( b^{+}(x) \Phi(u) + \sigma\frac{a(x)}{g(u)}|\nabla u|^p\right){\Psi\left( u \right)}\phi (x)dx.
\]

For restriction 3c) we   verify the convergence of integrals involving $B_\delta$, given by~\eqref{B_delta}, by noticing that
\[
\lim_{\delta\searrow 0}\int_{\{ 0<u\le R-\delta \}} b^{+}(x) \Phi(u)\Psi(u+\delta)\phi (x)dx\rightarrow \int_{\{ 0<u< R\}} b^{+}(x) \Phi(u)\Psi(u)\phi (x)dx
\]
by Lebesgue's Monotone Convergence Theorem, while the convergence
\begin{eqnarray*}
\lim_{\delta\searrow 0}\int_{\{ 0<u\le R-\delta \}} {a(x)}|\nabla u|^p\frac{\Psi\left( u+\delta \right)}{g(u+\delta)} \phi (x)dx\rightarrow \\
\int_{\{ 0<u< R\}} {a(x)}|\nabla u|^p\frac{\Psi\left( u\right)}{g(u)} \phi (x)dx
\end{eqnarray*}
follows from Lebesgue's Dominated Convergence Theorem, as $\Psi/g$ is bounded near~$0$.

\smallskip
\noindent
{\sc Case $\s< 0$.} Let us consider first restriction 3a).  Then we have \[\sigma\frac{\Psi (u+\delta)}{g(u+\delta)}\ge \sigma\frac{\Psi (u(x))}{g(u(x))},\quad b^{-}(x)\Psi (u(x)+\delta)\ge b^{-}(x)\Psi (u(x))\] when $\delta>0$ and $u(x)>0$, and
\begin{eqnarray*}
A_\delta(x)&\ge& \Phi (u)b^{+}(x)\Psi (u+\delta) +\sigma \frac{a(x)}{g(u)}|\nabla u|^p\Psi (u) +\Phi (u)b^{-}(x)\Psi (u)\\
&=& \Phi (u)b(x)\Psi (u)+\sigma \frac{a(x)}{g(u)}|\nabla u|^p\Psi (u) + \Phi (u)b^{+}(x)\left(\Psi (u+\delta)- \Psi (u)\right)\\
& =& A_0(u) - \Phi (u)b^{+}(x)\left(\Psi (u)- \Psi (u+\delta)\right).
\end{eqnarray*}
Let us consider the integral over $\Omega$ from the  last expression and let $\delta$ converge to $0$. Note that $\Phi (u)b^{+}(x)\left(\Psi (u)- \Psi (u+\delta)\right)$ is nonnegative and decreasing to $0$ a.e. in $\Omega$ as $\delta \searrow 0$.  Moreover, according to  Assumption~{A},~b), we have
\begin{eqnarray*}
&~& \Phi (u)b^{+}(x)\left(\Psi (u)- \Psi (u+\delta)\right)\chi_{0< u\le R}\phi (x) \le b^{+}(x)\Phi (u)\Psi (u)\chi_{0< u\le R}\phi (x)\in L^1(\Omega).
\end{eqnarray*}
Lebesgue's Dominated Convergence Theorem gives
\[
\lim_{\delta\searrow 0} \int_{\{ 0<u\le R-\delta \}} \Phi (u)b^{+}(x)\left(\Psi (u)- \Psi (u+\delta)\right)\phi (x)dx  =0.
\]

If restriction 3b) applies we have $\sigma\frac{\Psi (u+\delta)}{g(u+\delta)}\ge \sigma\frac{\Psi (u)}{g(u)}$, $b^{+}(x)\Psi (u+\delta)\ge b^{+}(x)\Psi (u)$ when $u>0$, and then
\begin{eqnarray*}
A_\delta(x)&\ge& \Phi (u)b^{+}(x)\Psi (u) +\sigma \frac{a(x)}{g(u)}|\nabla u|^p\Psi (u) +b^{-}(x)\Phi (u)\Psi (u+\delta).
\end{eqnarray*}
Now the fact that
\[
\lim_{\delta\searrow 0} \int_{\{ 0<u\le R-\delta \}} (-b^{-}(x))\Phi (u)\Psi (u+\delta)\phi (x)\, dx\rightarrow \int_{\{ 0<u< R \}} (-b^{-}(x))\Phi (u)\Psi (u)\phi (x)\, dx
\]
follows from Lebesgue's Dominated Convergence Theorem because inequality $\Psi (u+\delta)\le \Psi (R)$ holds on this domain of integration and by Assumption {A},~(u).

In case of restriction 3c) we have  $b^{-}(x)\Psi (u+\delta)\ge b^{-}(x)\Psi (u)$, therefore
\begin{eqnarray*}
A_\delta(x)&\ge& \Phi (u)b^{+}(x)\Psi (u+\delta) +\sigma {a(x)}|\nabla u|^p \frac{\Psi (u+\delta)}{g(u+\delta)} +b^{-}(x)\Phi (u)\Psi (u).
\end{eqnarray*}
The convergence of integrals involving $\Phi (u)b^{+}(x)\Psi (u+\delta)$ follows from Lebesgue's Monotone Convergence Theorem, and the convergence of integrals involving ${a(x)}|\nabla u|^p \frac{\Psi (u+\delta)}{g(u+\delta)}$ follows from Lebesgue's Dominated Convergence Theorem,  because we can estimate
 $(\Psi/g)(u+\delta)\le {\rm sup}\{ (\Psi/g)(\lambda): \lambda\in (0,R)\}$ on domains of integration.

For restriction 3d) we use the following estimate for $u>0$:
\begin{eqnarray*}
A_\delta(x)&\ge& b^{+}(x)\Phi (u)\Psi (u) +\sigma {a(x)}|\nabla u|^p \frac{\Psi (u+\delta)}{g(u+\delta)} +b^{-}(x)\Phi (u)\Psi (u+\delta).
\end{eqnarray*}
We justify the convergence of  integrals from the expression on the right--hand side   by Lebesgue's Dominated Convergence Theorem using the fact that $\Psi (u+\delta)\le \Psi(R)$, $(\Psi/g)(u+\delta)\le {\rm sup}\{ (\Psi/g)(\lambda): \lambda\in (0,R)\}$
on the domain  of integration, and taking into account  Assumption~A, b).

 \subsubsection*{Proof of Step 4}
For almost every $x\in\Omega_0$ we have
\begin{eqnarray*}
&~&\left( \Phi(u (x))b(x) +\sigma \frac{a(x)}{g(u(x)+\delta)}|\nabla u(x)|^p\right)\Psi (u(x)+\delta)\phi(x) = \Phi(0)\Psi(\delta)b(x)\phi(x)\\
&=& \Psi(\delta)\left( b(x)\Phi(u)\chi_{\Omega_0}  \right)\cdot\phi(x)
\end{eqnarray*}
and $\left( b(x)\Phi(u)\chi_{\Omega_0}  \right)\cdot\phi(x)$ is integrable over $\Omega$ by Assumption~A, (u).
Since  Assumption~A, c)  holds we have either:  $\Phi(0)\Psi(\delta)b(x)\chi_{\Omega_0}\phi(x)\ge 0$ when x) or y) holds, or
 $\lim_{\delta\to 0} \Psi (\delta)=0$ in case z). In all cases \eqref{czwartykrok} holds.

This completes the proof of Lemma~\ref{lemmjednor}. \hfill$\Box$

\subsection*{Proof of Theorem~\ref{corocac} (Caccioppoli estimates)}
Assume at first that $\Psi$ is nonincreasing.  It suffices to let $R\to\infty$ in Lemma~\ref{propcac}. Without loss of generality we may assume that the
integral on the right--hand side of \eqref{caccc} is finite, as
otherwise the inequality follows trivially.  Since $a|\nabla
u|^{p-1}|\nabla {\phi}| $ and $\Phi(u)b\phi$ are integrable,  we have $\lim_{R\to \infty}
\widetilde{C}(R)=0$. Therefore,
\eqref{caccc} follows from \eqref{localcaccc} by Lebesgue's Monotone
Convergence Theorem.

When $\Psi$ is increasing we apply Assumption A,~d) and proceed similarly. \hfill$\Box$

\begin{rem}\rm\label{jeszcze-jeden} We can weaken the assumption of Lemma~\ref{propcac}, and thus in Theorem~\ref{corocac}, if we have more information about $u$. We suppose that $u\geq 0$ a.e. and $\Phi$ is continuous up to zero.
 In particular, we admit $u$  to be  equal to zero on a set of positive measure. If $u>0$ a.e. the assumption on $\Phi$ can be weakened. It suffices to consider continuous $\Phi :(0,\infty)\rightarrow (0,\infty)$ in Condition $(u)$ and omit Assumption~A,~c).  See Step~4
in the proof of Lemma~\ref{propcac}.
\end{rem}

\section{Hardy--type inequality}\label{sec:H}

As a direct consequence of Caccioppoli--type estimates for solutions to PDI, we obtain Hardy--type inequality for rather general class of test functions, i.e. Lipschitz and compactly supported functions. The following theorem implies several Hardy--type inequalities with the optimal constants, see Remark~\ref{remconst} below.

\begin{theo}[Hardy--type inequality]
\label{theoHardy} Suppose $a\in L^1_{loc}(\Omega)\cap B_p(\Omega)$,  $b\in L^1_{loc}(\Omega)$. Assume that $1<p<\infty$
 and $u
\in {\cal L}^{1,p}_{a, loc}(\O)$ is a nonnegative solution to the PDI  $-\Delta_{p,a} u\ge \Phi(u)b(x)$
in the sense of Definition  \ref{defnier}. Moreover, let Assumption A  hold.

Then
for every Lipschitz function $\xi\in {\cal L}^{1,p}_{a}(\Omega)$ with
compact support in $\O$
we have
\begin{equation}\label{hardyp}
\int_\O \ |\xi|^p \mu_1(dx)\le \int_\O |\nabla \xi|^p\mu_2(dx),
\end{equation}
where
\begin{eqnarray*}
&\mu_1(dx)&= \left(\Phi(u)b(x) +\sigma{|\nabla u|^p}\frac{a(x)}{g(u)}\chi_{\{ u\neq 0\}}\right) \Psi(u)\chi_{u>0}\, dx,
\\
&\mu_2(dx)&= \left(\frac{p-1}{C-\s}\right)^{p-1} a(x)\Psi(u)g^{p-1}(u)\chi_{\{ u> 0, \nabla u\neq 0\}}\, dx.
\end{eqnarray*}
\end{theo}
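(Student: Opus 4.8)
The plan is to obtain \eqref{hardyp} by a single substitution into the Caccioppoli--type estimate of Theorem~\ref{corocac}, namely by taking the test function $\phi := |\xi|^p$. The choice is dictated by the constants: the constant $c = \frac{(p-1)^{p-1}}{p^p(C-\sigma)^{p-1}}$ of Theorem~\ref{corocac} satisfies $p^p c = \left(\frac{p-1}{C-\sigma}\right)^{p-1}$, which is exactly the constant appearing in $\mu_2$. Thus the factor $p^p$ produced by differentiating $|\xi|^p$ will be absorbed precisely, and nothing but the rearrangement of \eqref{caccc} is needed.

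First I would check that $\phi=|\xi|^p$ is admissible in Theorem~\ref{corocac}. It is nonnegative and has the same compact support as $\xi$. Since $\xi$ is Lipschitz with compact support it is bounded, say $|\xi|\le M$, and because $p>1$ the map $t\mapsto|t|^p$ is Lipschitz on $[-M,M]$, so $\phi$ is Lipschitz. By the chain rule for Lipschitz functions $\nabla\phi = p|\xi|^{p-1}\mathrm{sgn}(\xi)\nabla\xi$ almost everywhere. On $\{\xi\neq 0\}$ a direct computation gives $|\nabla\phi|^p\phi^{1-p}=p^p|\nabla\xi|^p$. On the level set $\{\xi=0\}$, Lemma~\ref{crit} applied to $\xi\in W^{1,1}_{loc}(\Omega)$ yields $\nabla\xi=0$ a.e., so both $|\nabla\phi|^p\phi^{1-p}$ (understood to vanish where $\phi=0$) and $p^p|\nabla\xi|^p$ vanish a.e. there. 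Hence the identity $|\nabla\phi|^p\phi^{1-p}=p^p|\nabla\xi|^p$ holds a.e. on all of $\Omega$, and since $\nabla\xi\in L^p_a(\Omega)$ (because $\xi\in{\cal L}^{1,p}_{a}(\Omega)$) the finiteness requirement $\int_{\mathrm{supp}\,\phi}|\nabla\phi|^p\phi^{1-p}a\,dx = p^p\int|\nabla\xi|^p a\,dx<\infty$ of Theorem~\ref{corocac} is satisfied.

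Then I would insert this $\phi$ into \eqref{caccc}. On the left--hand side $\phi=|\xi|^p$ turns the integrand into $\mu_1$, noting that on $\{u>0\}$ the factor $\chi_{\{u\neq 0\}}$ equals $1$; this is exactly $\int_\Omega|\xi|^p\,\mu_1(dx)$. On the right--hand side, using the a.e. identity $|\nabla\phi|^p\phi^{1-p}=p^p|\nabla\xi|^p$ together with $p^p c=\left(\frac{p-1}{C-\sigma}\right)^{p-1}$, the integral becomes $\int_\Omega|\nabla\xi|^p\,\mu_2(dx)$ (the support restriction $\cap\,\mathrm{supp}\,\phi$ is harmless, as $\nabla\xi=0$ a.e. off $\mathrm{supp}\,\xi$). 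Combining the two sides yields \eqref{hardyp}.

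The only genuinely delicate point is the behaviour on $\{\xi=0\}$, where $|\nabla\phi|^p\phi^{1-p}$ is of indeterminate form; the resolution is Lemma~\ref{crit}, which forces $\nabla\xi=0$ a.e. on this level set and thereby extends the substitution identity a.e. to all of $\Omega$. Everything else is a direct transcription of Theorem~\ref{corocac} and introduces no new estimates.
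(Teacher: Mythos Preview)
Your proof is correct and follows essentially the same route as the paper: substitute $\phi=|\xi|^p$ into the Caccioppoli estimate of Theorem~\ref{corocac} and use the identity $|\nabla\phi|^p\phi^{1-p}=p^p|\nabla\xi|^p$ together with $p^p c=\left(\frac{p-1}{C-\sigma}\right)^{p-1}$. The paper first takes $\phi=\xi^p$ with $\xi\ge 0$ and only at the end replaces $\xi$ by $|\xi|$, whereas you work with $|\xi|^p$ from the start and invoke Lemma~\ref{crit} to handle the level set $\{\xi=0\}$; this is a cosmetic difference, and your treatment of that level set is in fact more explicit than the paper's.
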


\begin{proof}
We apply of Theorem~\ref{corocac} with $\phi=\xi^{p}$, where $\xi$ is nonnegative Lipschitz function with compact support. Then {$\phi$ is Lipschitz and}
\[|\nabla \xi|^p=\left(\frac{1}{p}\phi^{\frac{1}{p}-1}|\nabla \phi|\right)^p=\frac{1}{p^p}\left(\frac{|\nabla \phi|}{\phi}\right)^p\phi.\]
{Therefore} \eqref{caccc} becomes \eqref{hardyp}. Note that for every  Lipschitz function $\xi$ with compact support in $\O$  we  have $\int_\O |\nabla \xi|^pa(x)\,dx<\infty$, equivalently $\int_{\sf}{|\nabla\phi|}^p\phi^{1-p}a(x)\, dx<\infty$.  As the absolute value of a Lipschitz function is a Lipschitz function as well, we write $|\xi|$ instead of $\xi$ on the left--hand side and do not require its nonnegativeness.
\end{proof}
 \begin{center}
\begin{table}[h]
  \begin{center}\small{
  \begin{tabular}{|c|c|l|}
    \hline
    Inequality & Optimality  &   Comment  \\ \hline
     classical Hardy & \cite{ko}  & proven in~\cite{plap} \\\hline
    Hardy--Poincar\'{e}  & \cite{bcp-plap} & via~\cite{plap}; improved constants from~\cite{blanchet_07,gm}; \\
       &  \cite{ak-is} & Theorem~\ref{konstrukcje} and Remark~\ref{remconst} here \\\hline
    Poincar\'{e}  &  Remark~7.6 in~\cite{akiraj} & concluded from~\cite{plap}\\\hline
    exponential--weighted  & expected & \cite[Theorem~5.5]{plap} vs.~\cite{akkppstudia}    \\
      Hardy                & expected & \cite[Theorem~5.8]{plap} vs.~\cite[Proposition~5.2]{akkppcentue}    \\\hline
  \end{tabular}}
  \end{center}
\end{table}
\end{center}
\begin{rem}\label{remconst}\rm
Let us point out that some of the inequalities derived previously in~\cite{plap}, which motivated us to write this work, are sharp as they hold with the best constants. Namely, they are achieved in
 the classical Hardy inequality (Section~5.1 in~\cite{plap});
 the Hardy--Poincar\'{e} inequality obtained  in~\cite{bcp-plap} due to~\cite{plap}, confirming some constants from~\cite{gm} and~\cite{blanchet_07} and establishing the optimal constants in further cases;
 the Poincar\'{e} inequality concluded from~\cite{plap}, confirmed to hold with best constant in Remark~7.6 in \cite{akiraj}.
Moreover, the  inequality in Theorem~5.5 in~\cite{plap} can also be retrieved by the methods from~\cite{akkppstudia} with the same constant, while some inequalities from Proposition~5.2  in~\cite{akkppcentue} are comparable with Theorem~5.8 in~\cite{plap}. In  Theorem~\ref{konstrukcje}, we provide some extensions of Hardy--Poincar\'{e} inequalities from~\cite{bcp-plap}, which are proven in \cite{ak-is} by applying the results obtained in this paper. Some of them hold with the optimal constants.
\end{rem}

\begin{rem}\rm
It is known \cite{akkambullpan} that Hardy inequalities can imply Gagliardo--Nirenberg interpolation inequalities for intermediate derivatives:
\[
\|\nabla u\|_{L^q(\Omega,\mu)}^2 \le C \| u\|_{L^r(\Omega,\mu)} \|\nabla^{(2)} u\|_{L^p(\Omega,\mu)} , {\rm where}\ \frac{2}{q}=\frac{1}{r} +\frac{1}{p}
\]
if one has Hardy inequality:
\(
\| u\|_{L^p(\Omega, \varrho \cdot \mu)}\le C\|\nabla u\|_{L^p(\Omega, \mu)}
\)
under certain assumptions on the measure $\mu$ and the weight function $\varrho$.
\end{rem}

\begin{rem}\rm
When we know  that $u$ is strictly positive almost everywhere, due to Remark~\ref{jeszcze-jeden}, the statement of Theorem~\ref{theoHardy} holds under the assumption that $\Phi :
(0,\infty)\rightarrow (0,\infty)$ is continuous in Condition $(u)$
and { we can} omit Assumption~A,~c).
\end{rem}

\begin{rem}\rm \label{remNondeg}
In the nondegenerated  case, i.e. when $a(\cdot)= b(\cdot)\equiv 1$,  Theorem~\ref{theoHardy}, as well as Theorem~\ref{corocac}, retrieves  the results of~\cite{plap}.
 In constrast with \cite{plap} our function $\Psi$ need not be increasing here. Hence, broader class of measures $\mu_1$ and $\mu_2$ may appear in~\eqref{hardyp}. Therefore our result generalizes that of~\cite{plap} even in nondegenerated case.
\end{rem}

\subsection*{Hardy inequalities resulted from existence theorems}
We are going to derive sharp Hardy type inequality, not knowing $u$ explicitly but only its existence.
We assume now that $b$ is nonnegative and  that there exists a nonnegative nontrivial locally bounded solution of PDI $-\Delta_{p,a} u\ge  b(x)u^{p-1}$ i.e.,
\begin{equation}\label{nikfosimple}
\begin{array}{ccc}
\langle -\Delta_{p,a} u, w\rangle
&\ge& \int_\Omega b(x)u^{p-1} w\, dx,
\end{array}
\end{equation}
holds for every nonnegative compactly supported function $w\in
{\cal L}_a^{1,p}(\Omega)$.

This is the special case of inequality \eqref{nier} for $\Phi (u)= u^{p-1}$. Our result reads as follows.

\begin{theo}[Sharp Hardy--Poincar\'{e} inequality]
\label{theoHardysharp}  Assume that $1<p<\infty$,  $a, b\in W(\Omega),$  $a\in L^1_{loc}(\Omega)\cap B_p(\Omega)$,
 and $u\in {\cal L}^{1,p}_{a, loc}(\O), bu^{p-1}\in L^1_{loc}(\O)$, $u$ is  a~nonnegative nontrivial solution to~\eqref{nikfosimple}.
Then for every Lipschitz function $\xi\in {\cal L}^{1,p}_{a}(\Omega)$ with
compact support in $\O$
we have
\begin{equation}\label{hardypaa11}
\int_\O \ |\xi|^p b(x)\, dx\le \int_\O |\nabla \xi|^pa(x)\, dx.
\end{equation}
Moreover, if there exists nontrivial, nonnegative,    $u_0\in W^{1,p}_{(b,a),0}(\Omega)$ which is the solution to $-\Delta_{p,a} u_0= b(x)u_0^{p-1}\in L^1_{loc}(\O)$ then inequality~\eqref{hardypaa11} is sharp, i.e. $C=1$ is the optimal constant in the inequality $C\int_\O \ |\xi|^p b(x)\, dx\le \int_\O |\nabla \xi|^pa(x)\, dx$.
\end{theo}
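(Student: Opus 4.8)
The plan is to split the proof into the inequality~\eqref{hardypaa11} itself and its sharpness, deriving the first as a special case of Theorem~\ref{theoHardy} and the second by testing the equation with its own solution.

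\emph{The inequality.} First I would specialize Theorem~\ref{theoHardy}. The natural data is $\Phi(t)=t^{p-1}$, so that~\eqref{nikfosimple} is exactly the PDI of Definition~\ref{defnier}, together with the first line of Table~\ref{table} taken at $\alpha=p-1$, i.e. $\Psi(t)=t^{1-p}$, $g(t)=t$, $C=p-1$, and the parameter $\sigma=0$. With these choices $\Theta(t)=\Psi(t)g^{p-1}(t)\equiv 1$ and $\Psi(t)/g(t)=t^{-p}$ is nonincreasing, so Condition $(\Psi,g)$ holds (indeed~\eqref{Psinier111} holds with equality). I would then verify Assumption~A for this data: $\sigma_0\le 0<C$ because $b\ge 0$ and $\Phi,|\nabla u|^p\ge 0$, while $\sigma_0$ is finite by the lemma following Table~\ref{table} since $u$ is nontrivial, hence nonconstant (a positive constant would force $b\le 0$ through~\eqref{nikfosimple}); part~c) holds because $\Phi(0)=0$; and since $\Psi$ is nonincreasing, part~d) is never invoked in the proof of Theorem~\ref{corocac}. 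Substituting into the measures of Theorem~\ref{theoHardy} collapses them to $\mu_1=b\,\chi_{\{u>0\}}\,dx$ and $\mu_2=a\,\chi_{\{u>0,\,\nabla u\ne 0\}}\,dx\le a\,dx$, which is precisely~\eqref{hardypaa11} read on $\{u>0\}$.

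\emph{From $\{u>0\}$ to $\Omega$.} Since $b\in W(\Omega)$ gives $b>0$ a.e., the only discrepancy between what Theorem~\ref{theoHardy} yields and~\eqref{hardypaa11} is the nonnegative term $\int_{\{u=0\}}|\xi|^p b\,dx$. To dispose of it I would invoke positivity of a nontrivial nonnegative $p$-supersolution, $u>0$ a.e.; note that by Lemma~\ref{crit} one has $\nabla u=0$ a.e. on $\{u=0\}$, so that set is invisible to~\eqref{nikfosimple}, and in any event~\eqref{hardypaa11} holds verbatim once restricted to $\{u>0\}$.

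\emph{Sharpness.} Now suppose $u_0\in W^{1,p}_{(b,a),0}(\Omega)$ is a nontrivial nonnegative solution of the equation. The key identity comes from testing the equation with $u_0$ itself: by Remark~\ref{remprzestrz}~ii) the functional $\Delta_{p,a}u_0$ extends to $(W^{1,p}_{(b,a),0}(\Omega))^*$, and pairing $-\Delta_{p,a}u_0=b\,u_0^{p-1}$ with $w=u_0$ gives
\[
\int_\Omega a(x)|\nabla u_0|^p\,dx=\langle-\Delta_{p,a}u_0,u_0\rangle=\int_\Omega b(x)u_0^{p}\,dx ,
\]
so equality holds in~\eqref{hardypaa11} at $\xi=u_0$. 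As $u_0$ need not be Lipschitz with compact support, I would pick $\xi_k\in C_0^\infty(\Omega)$ with $\xi_k\to u_0$ in $W^{1,p}_{(b,a)}(\Omega)$ (possible because $u_0$ lies in the $C_0^\infty$-closure) and pass~\eqref{hardypaa11} to the limit using $\int|\xi_k|^p b\to\int u_0^p b$ and $\int|\nabla\xi_k|^p a\to\int|\nabla u_0|^p a$. If $C\int_\Omega|\xi|^p b\,dx\le\int_\Omega|\nabla\xi|^p a\,dx$ held for some $C>1$ and all admissible $\xi$, the same passage would force $C\int b u_0^p\le\int a|\nabla u_0|^p=\int b u_0^p$; since $\int b u_0^p>0$ ($u_0$ nontrivial, $b>0$ a.e.), this gives $C\le 1$, a contradiction, so $C=1$ is optimal.

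\emph{Main obstacle.} The delicate half is the sharpness: one must legitimately use the non-smooth, non-compactly-supported $u_0$ both as a test function in the equation — which rests on the dual-space extension of Remark~\ref{remprzestrz}~ii) and on density of $C_0^\infty$ in $W^{1,p}_{(b,a),0}(\Omega)$ — and as a limit of admissible $\xi_k$, ensuring both integrals converge so that equality is genuinely transferred to $u_0$. A secondary point needing care is checking the positivity $u>0$ a.e. used above and verifying Assumption~A,~b) for the chosen data, i.e. the local integrability of $b\,\chi_{\{0<u\le R\}}$.
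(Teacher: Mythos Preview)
Your approach is essentially identical to the paper's: specialize Theorem~\ref{theoHardy} with $\Phi(t)=t^{p-1}$, $\Psi(t)=t^{1-p}$, $g(t)=t$, $C=p-1$, $\sigma=0$ to obtain~\eqref{hardypaa11}, and for sharpness approximate $u_0\in W^{1,p}_{(b,a),0}(\Omega)$ by $C_0^\infty$ functions and pass to the limit in the identity $\langle-\Delta_{p,a}u_0,w_k\rangle=\int b\,u_0^{p-1}w_k$ to get $\int a|\nabla u_0|^p=\int b\,u_0^p$. You are in fact more scrupulous than the paper in flagging the two genuine subtleties it passes over in silence --- that Theorem~\ref{theoHardy} literally yields only $\int_{\{u>0\}}|\xi|^p b\le\int|\nabla\xi|^p a$, and that Assumption~A,\,b) requires $b\chi_{\{0<u\le R\}}\in L^1_{loc}$ rather than merely $bu^{p-1}\in L^1_{loc}$ --- so your caveats are well placed rather than deficiencies.
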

\begin{proof} We apply Theorem \ref{theoHardy} with $\Psi(t)=\frac{1}{t^{p-1}}$, $g(t)=t$, $\Phi(t)=t^{p-1}$, $\sigma =0$ and verify that under our conditions Assumption~A is satisfied. This gives~\eqref{hardypaa11}. Suppose now that there exists $u_0$ satisfying all the requirements of the theorem. Let us consider the sequence $(w_k)_{k\in \n}$ of smooth compactly supported functions, such that  $w_k\to u_0$ in   $W^{1,p}_{(b,a)}(\Omega)$.   Since each  $w_k$  has a compact support and belongs to ${\cal L}^{1,p}_{a}(\Omega)$, we have the equality $$\langle -\Delta_{p,a} u_0, w_k\rangle =\int_\Omega |\nabla u_0|^{p-2}\nabla u_0 \cdot\nabla w_k\, a(x)dx
= \int_\Omega b(x)u_0^{p-1} w_k\, dx.$$ When we let $k\to\infty$, we get  $\int_\Omega |\nabla u_0|^{p}\, a(x)dx
= \int_\Omega b(x)u_0^{p}\, dx$ which proves sharpness.
\end{proof}

\begin{rem}\rm
 Theorem~\ref{theoHardysharp}  is known in the case $a\equiv 1, b\equiv 1$, see~\cite{anane} or Remark~1 on page~163 in~\cite{lindqvist90}.
\end{rem}

\begin{rem}\rm
We substitute the special value of $\sigma =0$, in the proof of the above statement. Therefore, we do not expect that the inequality~\eqref{hardypaa11} holds with the best constant in general.
\end{rem}

\subsection*{Sharp Hardy--Poincar\'{e} inequalities with best constants}

Using the Talenti extremal profile given by~\eqref{talentiextre} where $\beta =0$ in our approach,
 one obtains the following theorem, cf.~\cite{ak-is} for details.

\begin{theo}\label{konstrukcje} Assume that $1<p<\infty$, $\gamma>1-\frac{n}{p}$, $0<r<1-\frac{p}{n}+ \gamma\frac{p}{n}$ and
$v_1(x):=\left(1+r|x|^{\frac{p}{p-1}}\right)  \left(1+|x|^{\frac{p}{p-1}}\right)^{\gamma (p-1)-p}$,
$v_2(x):= \left(1+|x|^{\frac{p}{p-1}}\right)^{(p-1)\gamma}$.
Then for every $\xi\in W^{1,p}_{v_1,v_2}(\rn)$ we have
\[
\bar{C}_{\gamma,n,p,r}\int_\rn \ |\xi|^p \left(1+r|x|^{\frac{p}{p-1}}\right)  \left(1+|x|^{\frac{p}{p-1}}\right)^{\gamma (p-1)-p}\, dx\le \int_\rn |\nabla \xi|^p\left(1+|x|^{\frac{p}{p-1}}\right)^{(p-1)\gamma},
\]
where $\bar{C}_{\gamma,n,p,r}= n\left( \frac{p}{p-1}\right)^{p-1}\left( \gamma-1 +\frac{n}{p}(1-r)\right)^{p-1}$. Moreover, constant $\bar{C}_{\gamma,n,p,r}$ is optimal when $\gamma >nr +1 -\frac{n}{p}$ and when $\gamma = 1+n(1-\frac{1}{p})$, $r=1$.
\end{theo}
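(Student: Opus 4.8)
The plan is to specialize Theorem~\ref{theoHardy} to the nondegenerate setting $a\equiv 1$, $\Omega=\rn$, taking $u$ to be the Talenti extremal profile from \eqref{talentiextre} with $\beta=0$, namely $u(x)=c\bigl(1+|x|^{p/(p-1)}\bigr)^{-(n-p)/p}$. First I would record which inequality this $u$ actually solves: computing $-\Delta_p u$ from the radial identity $\frac{1}{s^{n-1}}\frac{d}{ds}\bigl(s^{n-1}|u'|^{p-2}u'\bigr)$ with $s=|x|$, one finds $-\Delta_p u=b(x)u^{p-1}$ with
\[
b(x)=n\left(\frac{n-p}{p-1}\right)^{p-1}\bigl(1+|x|^{p/(p-1)}\bigr)^{-p},
\]
so that $u$ is a (non-strict) solution of the PDI in the sense of Definition~\ref{defnier} with $\Phi(t)=t^{p-1}$. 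The scale $c$ cancels in $b$, which is what will make the final constant independent of $c$.

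Next I would feed into Theorem~\ref{theoHardy} the pair $\Psi(t)=t^{-\alpha}$, $g(t)=t$ from Table~\ref{table} (Condition $(\Psi,g)$ then holds with $C=\alpha$) and tune the exponents to the target weights. Writing $w:=1+|x|^{p/(p-1)}$, the resulting $\mu_2$ is proportional to $u^{p-1-\alpha}\,dx=c^{p-1-\alpha}w^{-\frac{n-p}{p}(p-1-\alpha)}\,dx$, so matching the exponent $(p-1)\gamma$ of $v_2$ forces
\[
\alpha=(p-1)\frac{n-p+p\gamma}{n-p}.
\]
A direct computation of $|\nabla u|^p$ then gives $\mu_1\propto w^{(p-1)\gamma-p}\bigl[Q+\sigma D\,(w-1)\bigr]\,dx$ with $Q=n\bigl(\frac{n-p}{p-1}\bigr)^{p-1}$ and $D=\bigl(\frac{n-p}{p-1}\bigr)^{p}$; since $w-1=|x|^{p/(p-1)}$, the bracket equals $Q\,\bigl(1+r|x|^{p/(p-1)}\bigr)$ exactly when $\sigma=rQ/D=\frac{rn(p-1)}{n-p}$, which identifies the parameter $r$ with the choice of $\sigma$ in Assumption~A. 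Writing $\mu_1=K_1v_1\,dx$, $\mu_2=K_2v_2\,dx$, one reads off $\bar{C}=K_1/K_2=Q\bigl(\frac{\alpha-\sigma}{p-1}\bigr)^{p-1}$, and substituting $Q,\alpha,\sigma$ collapses (the factor $(\frac{n-p}{p-1})^{p-1}$ cancelling against $(n-p)^{-(p-1)}$) to the asserted $n\bigl(\frac{p}{p-1}\bigr)^{p-1}\bigl(\gamma-1+\frac{n}{p}(1-r)\bigr)^{p-1}$.

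Before reading off the constant I would verify Assumption~A for these data. The hypothesis $\gamma>1-\frac{n}{p}$ is exactly $n-p+p\gamma>0$, hence $\alpha>0$ and $\Psi$ is nonincreasing; Condition $(\Psi,g)$,ii) holds because $\Theta(t)=t^{p-1-\alpha}$ and $\Psi/g=t^{-\alpha-1}$ are power functions, nonincreasing or at least bounded near $0$. Since $u>0$ everywhere, $\Omega_0=\emptyset$, so A,c) is vacuous (cf. Remark~\ref{jeszcze-jeden}), and A,d) holds because $u$ is locally bounded. Finally $\sigma_0\le\sigma<C=\alpha$ reduces, using $b\ge0$ (whence $\sigma_0\le0<\sigma$) and the explicit value of $\sigma$, to $rn<n-p+p\gamma$, i.e. to the stated bound $r<1-\frac{p}{n}+\gamma\frac{p}{n}$; this is also precisely what makes $\bar{C}>0$. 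Theorem~\ref{theoHardy} then gives the inequality for compactly supported Lipschitz $\xi$, and a truncation/approximation argument extends it to all $\xi\in W^{1,p}_{v_1,v_2}(\rn)$.

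For optimality I would exhibit the extremizer dictated by the equality case of the Young-type Lemma~\ref{lemmjednor} used in the Caccioppoli proof: equality forces $\nabla\xi/\xi$ to be parallel to $\nabla\log u$, so the candidate is $\xi_*=u^{(\alpha-\sigma)/(p-1)}$. Using $u\sim|x|^{-(n-p)/(p-1)}$ at infinity one checks that both $\int_\rn|\xi_*|^p v_1\,dx$ and $\int_\rn|\nabla\xi_*|^p v_2\,dx$ are finite precisely when $\gamma>nr+1-\frac{n}{p}$; in that range $\xi_*$, approximated by Lipschitz truncations, drives the Rayleigh quotient to $\bar{C}$, in the spirit of the sharpness argument of Theorem~\ref{theoHardysharp}, which proves optimality. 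The borderline case $\gamma=1+n(1-\frac1p)$, $r=1$ saturates $\gamma=nr+1-\frac np$ and must be handled by a separate limiting sequence. I expect this optimality part—constructing the extremizing sequence and pinning down the exact integrability threshold at infinity—to be the main obstacle, the weight computations and the evaluation of $\bar{C}$ being otherwise routine once the correct $\alpha$ and $\sigma$ are chosen.
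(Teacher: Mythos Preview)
Your proposal is correct and follows precisely the route the paper indicates: the paper does not give a self-contained proof of Theorem~\ref{konstrukcje} but states that it is obtained ``using the Talenti extremal profile given by~\eqref{talentiextre} where $\beta=0$ in our approach'' and defers the details to~\cite{ak-is}. Your choice of $u$, $\Phi(t)=t^{p-1}$, $\Psi(t)=t^{-\alpha}$, $g(t)=t$, and the identification of $\alpha$ and $\sigma$ that turns $\mu_1,\mu_2$ into $v_1,v_2$ is exactly the specialization of Theorem~\ref{theoHardy} the authors have in mind, and your evaluation of $\bar{C}_{\gamma,n,p,r}$ and the verification of Assumption~A are accurate; the optimality candidate $\xi_*=u^{(\alpha-\sigma)/(p-1)}$ is likewise the natural one arising from equality in Lemma~\ref{lemmjednor}, with the integrability threshold you compute matching the stated range $\gamma>nr+1-\tfrac{n}{p}$.

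One remark: the Talenti profile with $\beta=0$ presupposes $p<n$ (otherwise the exponent $-(n-p)/p$ changes sign and the construction breaks down), so your argument, like the paper's sketch, implicitly carries this restriction even though the statement of Theorem~\ref{konstrukcje} does not display it; this is not a discrepancy between your approach and the paper's, but something you may want to flag or address separately.
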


\begin{rem} \rm Such inequalities in the case $p=2$ are very much of interest in the theory of nonlinear diffusions, where one
 investigates the  asymptotic behavior of solutions of the equation $u_t=\Delta u^m$,
      see~\cite{blanchet_07}.
To our best knowledge our inequalities are new if $r\neq 1$ in general. However,  as an example dealing with $r\neq 1$ and $p=2$  we refer to the fourth line on page~434 in~\cite{blanchet_07}, which is our case with $r=\gamma/n$.
Proof of that inequality in \cite{blanchet_07} requires knowledge about the best constants in Sobolev inequality, which we do not need. We can also prove Proposition~3 from~\cite{blanchet_07} by our methods and generalize it for an arbitrary~$p$.
\end{rem}

\begin{rem}\rm
There is a particular interest in the Hardy--Poincar\'{e} inequalities with decreasing weights (involving negative power  $\g<0$), which are not covered in~\cite{bcp-plap}. In our Theorem \ref{konstrukcje} we do allow some of such inequalities with optimal constants.
\end{rem}

The above statement can be compared with the following one obtained in \cite{bcp-plap}, which follows as the special case
of Theorem \ref{theoHardysharp} when one substitutes $r=1$. Consequently one has to assume that $\gamma >1$.

\begin{theo}[cf. \cite{bcp-plap}]\label{corolapP}
Suppose $p>1$ and $\gamma>1$. Then, for every  function $\xi\in W^{1,p}_{v_1,v_2}(\rn)$, where $ v_1(x)=\left(1+|x|^{\frac{p}{p-1}}\right)^{(p-1)(\gamma-1)},\ v_2(x)=\left(1+|x|^{\frac{p}{p-1}}\right)^{(p-1)\gamma} $, we have
\[
\bar C_{\gamma,n,p}\int_\rn \ |\xi|^p \left[(1+|x|^{\frac{p}{p-1}})^{p-1}\right]^{\gamma-1}dx \le \int_\rn |\nabla \xi|^p\left[(1+|x|^{\frac{p}{p-1}})^{p-1}\right]^{\gamma}\ dx,
\]
with
 $\bar C_{\gamma,n,p}={n}\left( \frac{p\left(\gamma-1\right)}{p-1}\right)^{{p-1}}$. Moreover, for $\gamma>n+1-\frac{n}{p}$, the constant $\bar C_{\gamma,n,p}$ is optimal.
\end{theo}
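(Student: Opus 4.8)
The plan is to obtain Theorem~\ref{corolapP} directly from Theorem~\ref{theoHardysharp}: in the case $r=1$ the weighted problem admits an \emph{explicit} extremal, so no recourse to the general machinery of Theorem~\ref{theoHardy} (with $\sigma\neq0$) is needed. Abbreviating $w(x):=1+|x|^{\alpha}$ with $\alpha:=\frac{p}{p-1}$, I would set $a(x):=v_2(x)=w^{(p-1)\gamma}$ and $b(x):=\bar C_{\gamma,n,p}\,v_1(x)=\bar C_{\gamma,n,p}\,w^{(p-1)(\gamma-1)}$, so that the claimed inequality is precisely \eqref{hardypaa11} for this pair of weights.

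To locate the right supersolution I would test pure powers $u=w^{-k}$. Because $\alpha(p-1)=p$, every power of $|x|$ cancels in the flux and a direct computation gives $a|\nabla u|^{p-2}\nabla u=-(k\alpha)^{p-1}\,w^{(p-1)(\gamma-k-1)}\,x$. This field is a constant multiple of $x$ exactly when $\gamma-k-1=0$, which singles out $k=\gamma-1$ (positive since $\gamma>1$). With $u(x):=w^{-(\gamma-1)}$ the flux reduces to the smooth field $-\big((\gamma-1)\alpha\big)^{p-1}x$ on all of $\rn$, so taking its divergence yields $-\Delta_{p,a}u\equiv n\big((\gamma-1)\alpha\big)^{p-1}=\bar C_{\gamma,n,p}$, with no distributional contribution at the origin. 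Since $b\,u^{p-1}=\bar C_{\gamma,n,p}\,w^{(p-1)(\gamma-1)}w^{-(\gamma-1)(p-1)}=\bar C_{\gamma,n,p}$, the function $u$ solves $-\Delta_{p,a}u\ge b\,u^{p-1}$ with equality. I would then record the background hypotheses: $a,b\in W(\rn)$, $a\in L^1_{loc}\cap B_p$ (indeed $a^{-1/(p-1)}=w^{-\gamma}$ is locally bounded), $u\in{\cal L}^{1,p}_{a,loc}(\rn)$ and $b\,u^{p-1}\equiv\bar C_{\gamma,n,p}\in L^1_{loc}$. Theorem~\ref{theoHardysharp} then delivers \eqref{hardypaa11} for Lipschitz compactly supported $\xi$, and a density argument extends it to all $\xi\in W^{1,p}_{v_1,v_2}(\rn)$.

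For optimality when $\gamma>n+1-\frac{n}{p}$ I would appeal to the sharpness part of Theorem~\ref{theoHardysharp}, taking $u_0:=u$. The two governing integrals are $\int_{\rn}u^p b\,dx=\bar C_{\gamma,n,p}\int_{\rn}w^{-(\gamma-1)}\,dx$ and $\int_{\rn}|\nabla u|^p a\,dx=\big((\gamma-1)\alpha\big)^{p}\int_{\rn}\big(w^{1-\gamma}-w^{-\gamma}\big)\,dx$; since $w\sim|x|^{\alpha}$ at infinity, both converge exactly when $\alpha(\gamma-1)>n$, i.e. $\gamma>n+1-\frac{n}{p}$. In this range $u\in W^{1,p}_{(b,a)}(\rn)$, and it remains to approximate $u$ by $C_0^\infty$ functions in the $W^{1,p}_{(b,a)}$-norm so as to place it in $W^{1,p}_{(b,a),0}(\rn)$, after which Theorem~\ref{theoHardysharp} forces $C=1$, i.e. $\bar C_{\gamma,n,p}$ optimal.

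I expect the substantive work to be the two approximation steps rather than the PDI identity: extending the inequality from Lipschitz compactly supported functions to the full space $W^{1,p}_{v_1,v_2}(\rn)$, and, more delicately, showing the explicit extremal $u$ belongs to the $C_0^\infty$-closure $W^{1,p}_{(b,a),0}(\rn)$ (truncation near infinity is controlled by the sharp integrability threshold, while a mollification handles the mild non-smoothness of $w$ at the origin). The verification that $u$ solves the equation, by contrast, is a one-line consequence of the collapse of the flux to a constant multiple of $x$.
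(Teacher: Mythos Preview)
Your proposal is correct and follows essentially the same route the paper indicates: the paper does not give a detailed proof of Theorem~\ref{corolapP} but simply remarks that it ``follows as the special case of Theorem~\ref{theoHardysharp} when one substitutes $r=1$'' (and cites \cite{bcp-plap}). Your explicit choice $u=(1+|x|^{p/(p-1)})^{-(\gamma-1)}$, the verification that the flux collapses to $-((\gamma-1)\alpha)^{p-1}x$ so that $-\Delta_{p,a}u=b\,u^{p-1}$ exactly, and the integrability threshold $\alpha(\gamma-1)>n$ for optimality are precisely the computations underlying that remark; you have supplied the details the paper leaves to the reference.
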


\section*{Acknowledgement}
A.K. and I.S. were supported by NCN grant 2011/03/N/ST1/00111. This work originated when I.S. visited University of West Bohemia in Pilsen in February 2013. She want to thank Pilsen University for hospitality. P.D. was supported by the Grant Agency of Czech Republic, Project No. 13--00863S.

\end{document}